\def\H{{\cal H}}
\def\F{\mathscr{F} }
\def\N{\mathbb{N}}
\def\R{\mathbb{R}}
\def\Z{\mathbb{Z}}
\def\C{\mathbb{C}}
\def\H2{H^2(\R^N)}
\def\L2{L^2(\R^N)}
\def\to{\rightarrow}
\newcommand{\dt}{\,\mathrm{d}t}
\def\jb#1{\langle#1\rangle} \def\norm#1{\|#1\|}
\def\normb#1{\bigg\|#1\bigg\|} 
\def\H{{\cal H}}
\def\H1{H^1(\R)}
\newcommand{\wt}{\widetilde}
\newcommand{\al}{\alpha} 
 \newcommand{\de}{\delta}
 \newcommand{\Om}{\Omega}
\newcommand{\Ga}{\Gamma}
\newcommand{\im}{\mathop{\mathrm{Im}}}
   \newcommand{\I}{\infty}
 \newcommand{\Del}[1]{}
\numberwithin{equation}{section}
\newtheorem{thm}{Theorem}[section]
\newtheorem{lem}[thm]{Lemma}
\newtheorem{prop}[thm]{Proposition}
\newtheorem{definition}[thm]{Definition}
\theoremstyle{remark}
\newtheorem{remark}[thm]{Remark}
\newtheorem*{exam*}{Examples}
\newcommand{\EQ}[1]{\begin{align*}\begin{split} #1 \end{split}\end{align*}}
\newcommand{\EQn}[1]{\begin{align}\begin{split} #1 \end{split}\end{align}}
\newcommand{\EQnnsub}[1]{\begin{subequations}\begin{align} #1 \end{align}\end{subequations}}
\def\norm#1{\left\|#1\right\|}
\def\normb#1{\big\|#1\big\|}
\def\normbb#1{\Big\|#1\Big\|}
\def\absb#1{\big|#1\big|}
\def\brkb#1{\big(#1\big)}
\def\fbrk#1{\left\lbrace#1\right\rbrace}
\def\fbrkb#1{\big\lbrace#1\big\rbrace}
\def\jb#1{\langle#1\rangle}
\def\wt#1{\widetilde{#1}}
\def\wh#1{\widehat{#1}}
\def\wb#1{\overline{#1}}
\def\vec#1{\overrightarrow{#1}}
\def\pd{\partial}
\newcommand{\ra}{{\rightarrow}}
\def\lsm{\lesssim}
\def\gsm{\gtrsim}
\newcommand{\dd}{{\mathrm{d}}}
\def\al{\alpha}
\def\Om{\Omega}
\def\si{\sigma}
\def\de{\delta}
\begin{document}

\setcounter{page}{1}

\title[Wave operator]{Wave operator for the generalized derivative nonlinear Schr\"{o}dinger equation}

\author{Ruobing Bai}
\address{ School of Mathematics and Statistics\\
Henan University\\
Kaifeng 475004, China}
\email{baimaths@hotmail.com}
\thanks{}

\author{Jia Shen}
\address{Center for Applied Mathematics\\
Tianjin University\\
Tianjin 300072, China}
\email{shenjia@tju.edu.cn}
\thanks{}

%\author{Yifei Wu}
%\address{Center for Applied Mathematics\\
%Tianjin University\\
%Tianjin 300072, China}
%\email{yerfmath@gmail.com}
%\thanks{}
\subjclass[2010]{Primary  35Q55; Secondary 35B40}

%\date{\today}

\keywords{Generalized derivative nonlinear Schr\"odinger equation, Wave operators, Normal form method}

\begin{abstract}\noindent
In this work, we prove the existence of wave operator for the following generalized derivative nonlinear Schr\"odinger equation
\begin{align*}
i\partial_t u+\partial_x^2 u +i |u|^{2\sigma}\partial_x u=0,
\end{align*}
with $(t,x)\in\R\times\R$,  $\sigma\in \mathbb{N}$, and $\si\geq 3$.
The study of wave operators is an important part of the scattering theory, and it is useful in the construction of the nonlinear profile and the large data scattering.

The previous argument for small data scattering in \cite{BaiWuXue-JDE}, which is based on the local smoothing effect and maximal function estimates, breaks down when considering the final data problem. The main reason is that the resolution space does not provide smallness near the infinite time. We overcome this difficulty by invoking the gauge transformation and the normal form method.
\end{abstract}

\maketitle

\section{Introduction}
\vskip 0.2cm

In this paper, we consider the existence of the wave operators for the following generalized derivative nonlinear Schr\"odinger equation
\begin{align}\label{GDNLS}\tag{gDNLS}
i\partial_t u+\partial_x^2 u +i |u|^{2\sigma}\partial_x u=0,
\end{align}
with the asymptotic condition
\EQ{
\|u(t,x)-e^{it\pd_x^2 }V_\pm\|_{H_x^1(\R)}\rightarrow 0, \quad \mbox{as }t\rightarrow\pm \infty.
}
Here $\sigma > 0$, $u(t,x):\R \times\R \to \C$ is the unknown function, and $V_\pm\in H_x^1(\R)$. The \eqref{GDNLS} equation describes the physical phenomenon of Alfv\'en waves with small but finite amplitude propagating along the magnetic field in cold plasmas (see for example \cite{MioOginoMinamiTakeda-JPS-76}).

The solution to (\ref{GDNLS}) equation is invariant under the scaling
\begin{equation}\label{eqs:scaling-p}
u(t,x)\to u_\lambda(t,x) = \lambda^{\frac1{2\sigma}} u(\lambda^2 t, \lambda x) \ \ {\rm for}\ \ \lambda>0.
\end{equation}
%which maps the initial data as
%\begin{eqnarray}
%u(0)\to u_{\lambda}(0):=\lambda^{\frac1{2\sigma}} \varphi(\lambda x) \ \ {\rm for}\ \ \lambda>0.\nonumber
%\end{eqnarray}
Denote
$$
s_c:=\frac 12-\frac1{2\sigma},
$$
then the scaling  leaves  $\dot{H}^{s_{c}}$ norm invariant, that is,
\begin{eqnarray*}
\|u(0)\|_{\dot H^{s_{c}}_x}=\|u_{\lambda}(0)\|_{\dot H^{s_{c}}_x}.
\end{eqnarray*}

When $\sigma=1$, take the gauge transformation
$$w(t,x) = u(t,x) \exp\Big(\frac i2 \int_{-\infty}^x |u(t,y)|^2\,\mathrm dy \Big),$$
then the equation in \eqref{GDNLS}
is transformed into the standard derivative nonlinear Schr\"odinger equation
\begin{align}\label{DNLS}\tag{DNLS}
i\partial_t w+\partial_x^2 w +i\partial_x(|w|^2w) =0.
\end{align}

Many researchers have studied the well-posedness theory for the equation \eqref{DNLS}. In terms of local well-posedness theory, Hayashi and Ozawa \cite{HaOza-PhyD-92, HaOza-SJMA-94} established local well-posedness in the space $H^1(\R)$ (see also the previous works \cite{GuoTan-PRSE-91, TuFu-80-DNLS}). Subsequently, Takaoka \cite{Ta-99-DNLS-LWP} proved the local well-posedness in $H^{\frac 12}(\mathbb R)$, while Biagioni and Linares \cite{BiLi-01-Illposed-DNLS-BO} showed that the ill-posedness in $H^s(\mathbb R)$ with $s<\frac 12$. Additional research results on local well-posedness are documented in \cite{MoYo, DanNiLi, Gr-05, Grhe-95, Herr, Ta-16-DNLS-LWP,DNY21CMP, Guo-Ren-Wang-DNLS} and the references therein.

Turning to global well-posedness theory for the equation \eqref{DNLS}, Hayashi and Ozawa \cite{HaOza-PhyD-92} established the result in $H^1(\R)$ under the condition that the initial data satisfies $\|u_0\|_{L^2}< \sqrt {2\pi}$. Later, Wu \cite{Wu-APDE-13, Wu-APDE-15} relaxed the size of initial data to $\|u_0\|_{L^2}< 2\sqrt \pi$, and then the regularity was improved to $H^{\frac 12}(\R)$ by Guo and Wu \cite{GuoWu-DCDS-17}. More recently, Bahouri and Perelman \cite{Bahouri-Perelman} removed the size restriction on initial data and proved the global well-posedness in $H^{\frac 12}(\R)$. Furthermore, Griffiths, Killip, Ntekoume, and Visan \cite{Gri-Killip-Visan, Gri-Killip-Nte-Visan} improved the regularity to the critical space $L^{2}(\R)$.
For the global well-posedness results in different settings, the readers can refer to \cite{CollianderTao-SJMA-01, CollianderTao-SJMA-02, Miao-Wu-Xu:2011:DNLS, Ta-01-DNLS-GWP, MoOh, Mosincat} and the corresponding references therein. Additional results related to stability and inverse scattering theory for \eqref{DNLS} can be found in \cite{ChSiSu-DNLS, JeLiPeSu, CoOh-06-DNLS, Fu-16-DNLS, GuWu95, guo-DNLS, GuoNingWu-Pre, JeLiPeSu-1, KwonWu-JAM-18, Stefan-W-15-MultiSoliton-DNLS, LiBing-NingCui-PRE, LiSiSu1, LiPeSu1, LiPeSu3, LiSiSu, Miao-Tang-Xu:2016:DNLS, Miao-Tang-Xu:2017:DNLS, PeSh-DNLS, PeSh-DNLS-2, TaXu-17-DNLS-Stability}, as well as the respective references.

The \eqref{GDNLS} equation is a generalization of \eqref{DNLS}. Hao \cite{Hao-CPAA-07} was the first to establish its local well-posedness in $H^\frac 12(\R)$ for $\sigma \ge \frac 52$. Subsequently, Santos \cite{Santos-JDE-2015} proved local well-posedness in the space $L^{\infty}((0, T); H^{\frac 32}(\R)\cap \langle x\rangle^{-1}H^{\frac 12}(\R))$ for small initial data when $\frac 12 \leq \sigma< 1$, and in $H^\frac 12(\R)$ for small initial data when $\sigma >1$. Recently, Linares, Ponce, and Santos  \cite{LinaresPonceSantos-PRE, LinaresPonceSantos-PRE-II} considered this equation with rough nonlinearity $0<\sigma<\frac 12$ and proved local well-posedness in an appropriately weighted Sobolev space when the initial data satisfies
\begin{align*}
{\mbox{Inf}}_{x\in \R}\langle x\rangle^m|u_0(x)|\geq \lambda >0, \quad m\in \Z^+.
\end{align*}

Additionally, Hayashi and Ozawa \cite{HaOza-JDE-16} also contributed to the understanding of local well-posedness for \eqref{GDNLS} in an open interval with a Dirichlet condition by showing it holds in $H^2$ for $\frac 12 \leq \sigma< 1$ and in $H^1$ for $\sigma >1$.

Next, we review the global well-posedness results for \eqref{GDNLS}. When $0<\sigma <1$, Hayashi and Ozawa \cite{HaOza-JDE-16} obtained global solutions in $H^1(\R)$ but without uniqueness. When $\sigma > 1$, Fukaya, Hayashi, and Inui \cite{FukayaHayashiInui-APDE-17} established global well-posedness in a class of invariant manifolds in $H^1(\R)$. Moreover, for the range $\frac{\sqrt{3}}{2}<\sigma<1$, Pineau and Taylor \cite{Pineau-Taylor} proved global well-posedness in $H^s(\R)$ with $1\leq s<4\sigma$.

Regarding the scattering theory for \eqref{GDNLS}, the only result is given by the first author, Wu, and Xue in \cite{BaiWuXue-JDE}, who proved the small data scattering in $H^{s}(\R)$ with $s\geq \frac 12$ when $\sigma \geq 2$. They also established that small data scattering does not hold when $\sigma <2$, thus providing an optimal small data scattering result for derivative Schr\"odinger type equations. Moreover, for \eqref{DNLS}, the modified wave operator was first given by Hayashi-Ozawa \cite{HaOza-MA-94}, and the modified scattering was proved by Guo, Hayashi, Lin, and Naumkin \cite{GHLN13SIAM}.

However, for the \eqref{GDNLS} equation, there is currently no result on the existence of wave operators. It is even surprise for us that the existence of wave operator is much harder than the small data scattering, the reasons can be found in the next subsection. Moreover, the wave operator is very important for the construction of the nonlinear profile and the large data scattering. In this paper, we aim at addressing the remaining problem.

\subsection{Main results}
To begin with, we define the wave operators for \eqref{GDNLS}.
\begin{definition}[Existence of the wave operators\label{Def1}]
Let $V_\pm\in H_x^1$. We say that the wave operator exists for \eqref{GDNLS} with the final data $V_+$ or $V_-$, if we can find some $T>0$, such that there exists a unique solution $u(t)\in C([T, +\infty); H_x^1(\R))$ or $u(t)\in C((-\infty, -T]; H_x^1(\R))$ to the equation \eqref{GDNLS}, satisfying
$$
\lim_{t\rightarrow  +\infty}\|u(t)-e^{it\pd_x^2}V_+\|_{H_x^1}=0\text{, or }\lim_{t\rightarrow  -\infty}\|u(t)-e^{it\pd_x^2}V_-\|_{H_x^1}=0.
$$
Moreover, we denote the wave operator as
\EQ{
	\Omega_{\pm}: V_\pm\rightarrow u(t).
}
\end{definition}

The following is our main result:
\begin{thm}\label{main theorem1}
Let $\sigma=k\in \mathbb{N}$ and $k\geq 3$. Suppose also that $V_\pm\in H_x^1(\R)$. Then, the wave operator exists for \eqref{GDNLS} with the final data $V_\pm$.
\end{thm}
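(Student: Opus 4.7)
\medskip
The plan is to construct $\Om_+$ as a fixed point of the Duhamel map
\[
\Psi(u)(t)=e^{it\pd_x^2}V_+ + i\int_t^{+\infty} e^{i(t-s)\pd_x^2}\brk{|u|^{2\si}\pd_x u}(s)\ds
\]
in a complete metric space of functions on $[T,+\infty)$, with $T\gg 1$ chosen so that the linear tail $e^{it\pd_x^2}V_+$ is small in a suitable $L_x^\infty$-type dispersive norm. As the authors emphasize, the local-smoothing and maximal-function norms used for the Cauchy problem in \cite{BaiWuXue-JDE} offer no such smallness from the final datum $V_+$, so the resolution space must be built from genuine time-decay norms. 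I follow the gauge-plus-normal-form route announced in the introduction; the case $V_-$ is symmetric.

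\medskip
\emph{Gauge transformation.} First, set
\[
w(t,x)=u(t,x)\exp\!\brk{ic_k\int_{-\infty}^x |u(t,y)|^{2k}\dy}
\]
with $c_k$ chosen so that the derivative-loss term $i|u|^{2k}\pd_x u$ in the $u$-equation is converted, in the $w$-equation, into a Hamiltonian-type expression of the form $\pd_x(|w|^{2k}w)$ modulo corrections without derivative loss. This is the natural analogue of the Hayashi--Ozawa gauge used when $\si=1$ to pass from \eqref{GDNLS} to \eqref{DNLS}. Since $\int_{-\infty}^x|u|^{2k}\dy$ is controlled in $L_x^\infty$ by $\norm{u}_{H_x^1}^{2k}$ via Sobolev embedding, the map $u\mapsto w$ is bi-Lipschitz on $H_x^1$-balls, and the asymptotic condition on $u$ translates to an analogous condition on $w$ with a gauged final datum $\wt V_+$.

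\medskip
\emph{Profile and normal form.} I would then pass to the profile $f(t)=e^{-it\pd_x^2}w(t)$; the profile equation has the schematic form $\pd_t f=e^{-it\pd_x^2}\mathcal N(w)$ with $\mathcal N$ being $(2k+1)$-multilinear. In Fourier variables each such interaction carries an oscillation $e^{it\Phi}$, where $\Phi=\xi_0^2-\xi_1^2+\xi_2^2-\cdots\pm\xi_{2k+1}^2$ is the Schr\"odinger resonance function. Integrating by parts in $t$ via $\pd_t e^{it\Phi}=i\Phi e^{it\Phi}$ (the normal form) converts $\int_t^{+\infty}$ of the oscillatory $(2k+1)$-multilinear expression into a boundary term at time $t$ with $\Phi^{-1}$ as multiplier, plus a $(4k+1)$-multilinear remainder. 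Outside a thin near-resonant region $|\Phi|$ is comparable to the square of the largest frequency, so $\Phi^{-1}$ absorbs the $\pd_x$ inside $\mathcal N$; the near-resonant contribution is handled separately by direct Strichartz estimates. The $(4k+1)$-multilinear remainder, carrying $4k+1$ factors of $w$ that each decay like $t^{-1/2}$ in $L_x^\infty$, is integrable in $t$ precisely when $k\geq 3$.

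\medskip
\emph{Closing the contraction and main obstacle.} With the normal form identity at hand, a contraction closes on a small ball of a space of the form
\[
Y_T=\fbrk{u\in C([T,+\infty);H_x^1):\ \norm{u}_{L_t^\infty H_x^1}+\norm{\jb{t}^{1/2-\de}u}_{L_t^{q}L_x^\infty}\leq \eta},
\]
the weighted dispersive norm of the linear tail $e^{it\pd_x^2}V_+$ tending to $0$ as $T\to+\infty$ by stationary phase. The required convergence $\norm{u(t)-e^{it\pd_x^2}V_+}_{H_x^1}\to 0$ as $t\to+\infty$ then follows by unwinding the gauge, together with the decay of the boundary term and the integrability of the $(4k+1)$-multilinear tail produced by the normal form. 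The principal obstacle will be the normal form step itself: one must bound the resonance denominators $\Phi^{-1}$ uniformly after a suitable frequency decomposition, and verify that the higher-multilinear remainder is estimable in $H_x^1$ despite the $\pd_x$ sitting inside $\mathcal N$. The hypothesis $k\geq 3$ is indispensable exactly here, since it ensures enough $L_x^\infty$-decaying factors to compensate both for the lost derivative and for the slow $t^{-1/2}$ dispersion in one space dimension.
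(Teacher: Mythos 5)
Your overall architecture (gauge transform, then normal form in time, then a contraction on a half-line $[T,\infty)$) matches the paper's, but two of your concrete choices would break the argument. First, the gauge target is wrong: you aim to convert $i|u|^{2k}\pd_x u$ into ``$\pd_x(|w|^{2k}w)$ modulo corrections,'' but $\pd_x(|w|^{2k}w)$ still contains the term $|w|^{2k}\pd_x w$, which is exactly the \emph{temporally resonant} interaction --- when the derivative falls on an unconjugated high-frequency factor, the phase $\Phi=\xi^2-\eta_{2k+1}^2-\cdots$ vanishes on a large set (output frequency $\approx$ high input frequency), so the normal form integration by parts cannot recover the lost derivative there. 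The paper instead takes the gauge $w=e^{\frac i2\int_{-\infty}^x|u|^{2\sigma}dy}u$ (the $a=-1/2$ choice in \eqref{eq:gauge-transform-general}) precisely so that $|w|^{2\sigma}\pd_x w$ is \emph{eliminated} and the only derivative term left is $i\sigma|w|^{2\sigma-2}w^2\pd_x\wb w$, for which $\Phi\sim\eta_{2k+1}^2$ on the whole low$\times\cdots\times$low$\times$high region --- no ``thin near-resonant set'' needs to be excised for that term. Your remark that the $k\ge 3$ hypothesis enters through integrability of the higher-order remainder is also off: in the paper it comes from the integral term $F_1$ produced by the gauge, whose estimates require nonnegative powers such as $\|w\|_{L_{t,x}^\infty}^{2k-6}$.

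Second, your resolution space is built on the weight $\jb{t}^{1/2-\de}$ in an $L_x^\infty$-type norm and on the claim that each factor of $w$ decays like $t^{-1/2}$ in $L_x^\infty$. For $V_+\in H_x^1$ only, $e^{it\pd_x^2}V_+$ has no pointwise time decay (the dispersive estimate needs $L^1$-type data), so neither the smallness of the linear tail in your $Y_T$ norm nor the claimed decay of the remainder can be verified; this is the very obstruction the paper flags. The paper's substitute is to use unweighted Strichartz norms ($\jb{\pd_x}w\in L_t^4L_x^\infty$ and $L_{t,x}^6$), taking smallness only from the tail of the globally finite $L_{t,x}^6$ norm, and --- crucially --- to insert a high-frequency cutoff $N\ge N_0$ into the normal form so that the boundary term $\Om(w)$ gains a factor $N_0^{-1/2-}$ from Bernstein rather than from time decay; the ill-defined boundary term at $t=+\infty$ is then handled by approximating with solutions $w_n$ posed at finite times $t_n\to\infty$. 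Without some replacement for these two devices, your contraction does not close in $H_x^1$.
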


We make several remarks regarding the result.
 \begin{remark}\label{r-1}
\begin{enumerate}
\item
This is the first result on the existence of wave operator for \eqref{GDNLS}.

\item
The study of wave operator has some new difficulties comparing to the scattering presented in \cite{BaiWuXue-JDE}. The resolution space used in \cite{BaiWuXue-JDE} does not provide smallness when considering the final data problem. We overcome this difficulty by employing the gauge transformation and normal form method.

\item
Our method is restricted to deal with the algebraic nonlinear terms. Moreover, the condition $k\geq 3$ arises from the integral term produced by the gauge transformation. However, we are of the opinion that this restriction is not sharp.

\item
The advantage of our approach is that the work space allows perturbation. Therefore, we can give the perturbed local theory, stability, and the existence of nonlinear profile for \eqref{GDNLS} in the energy space using the method in this paper. This may enable us to further study the large data scattering problem.

\end{enumerate}
\end{remark}

\subsection{The main difficulty and our methods}
\

$\bullet$ \textit{Main difficulty.}
Now, we explain the main difficulty in the investigation of wave operator problem. Following the approach by Bai-Xue-Wu \cite{BaiWuXue-JDE} to prove the small data scattering, we encounter the following
\EQ{
\normbb{\pd_x \int_I e^{i(t-s)\pd_x^2}(|u|^{2\si}u_x) \dd s}_{L_t^\I L_x^2} \lsm \normbb{ \int_I e^{i(t-s)\pd_x^2}(|u|^{2\si}\pd_x^2u) \dd s}_{L_t^\I L_x^2} + \text{ easier terms}.
}
The main task is to close the estimate with $H^1$-data. Note that there is a first-order derivative gap between $\pd_x^2u$ and the initial data. The only way to fill in the gap is the use of the smoothing effect estimate, which can absorb a $\frac12$-order derivative. Therefore, combining the homogeneous and inhomogeneous smoothing effect estimates,
\EQ{
\normbb{\pd_x \int_I e^{i(t-s)\pd_x^2}(|u|^{2\si}u_x) \dd s}_{L_t^\I L_x^2} \lsm & \normb{D^{\frac12} (|u|^{2\si}u_x)}_{L_x^1 L_t^2} \\
\lsm & \normb{D^{\frac32}u}_{L_x^\I L_t^2} \norm{u}_{L_x^{2\si} L_t^\I}^{2\si} +\text{ other terms}.
}
The solution can be small in those norms by taking small data for Cauchy problem. Unfortunately, the method cannot be applied to the final data problem for two reasons:
\begin{itemize}
\item
Those norms in the resolution space cannot provide the smallness near the infinite time, due to the \textit{endpoint} space-time exponents in $L_x^\I$ and $L_t^\I$.
\item
From the perspective of the smoothing effect estimate, in the above mentioned approach, the \textit{only choice} for the $D^{\frac12} (|u|^{2\si}u_x)$ and $D^{\frac32}u$ seems $L_x^1 L_t^2$ and $L_x^\I L_t^2$, respectively. Then, it seems difficult to close the estimates by perturbing the time-space index.
\end{itemize}
Consequently, it requires us to find another method to cover the first-order derivative gap when considering the final data problem.

$\bullet$ \textit{The gauge transformation.} It is widely recognized that the classical \eqref{DNLS} equation, when subjected to a gauge transformation, has seen significant advancements in studies related to its well-posedness due to alterations in its equation structure. The main observation is that the nonlinear term $|u|^{2\sigma}\pd_x\wb u$ possesses better non-resonance property than the original one.

We use the gauge transformation in the following form:
\begin{align}\label{8242}
w(t,x)=e^{\frac i2\int_{-\infty}^x|u(t,y)|^{2\sigma} dy}u(t,x),
\end{align}
which transforms the \eqref{GDNLS} equation to
\begin{align}\label{GDNLS-w}
i\partial_t w+\partial_x^2 w=F(w),
\end{align}
where $F(w)$ is given by
\begin{align}\label{9141}
F(w):=\sigma(\sigma-1)w \mbox{Im}\int_{-\infty}^x(\partial_y w\bar{w})^2|w|^{2\sigma-4}dy-\frac{\sigma+1}{4}|w|^{4\sigma}w+i\sigma|w|^{2\sigma-2}w^2 \partial_x \wb{w}.
\end{align}
Similar transforms also appeared in \cite{Hao-CPAA-07,HaOza-JDE-16}. The key point of this transformation lies in its ability to eliminate the most troublesome interactions within the derivative nonlinearity $|u|^{2\sigma}\pd_x u$. Although it introduces a new integral term when compared to the classical \eqref{DNLS} equation, effective $L_x^\I$-control can still be maintained.

$\bullet$ \textit{The normal form method.} We consider the case when $\si=k$ is an integer. The most complex scenario is as follows:
\EQ{
\int_{+\infty}^{t}e^{i(t-s)\pd_x^2}(w_{low}^{2k}\partial_x^2 \wb{w}_{high})ds,
}
where ``high" and ``low" represents the size of frequency, and we do not distinguish the $w_{low}$ and $\wb{w}_{low}$.

First, we need to employ the normal form method introduced by Shatah \cite{Sha85CPAM} to cover the derivative loss. The above integral can be rewritten as
\EQ{
e^{it\pd_x^2}\F_\xi^{-1}\int_{+\infty}^{t} \int_{\xi=\sum_{j=1}^{2k+1}\eta_j}e^{-is\Phi}\wh{v}_{low}(\eta_1) \cdots \wh{v}_{low}(\eta_{2k}) \wh{\pd_x^2\wb{v}_{high}}(\eta_{2k+1})\dd \eta_1 \cdots \dd \eta_{2k} \dd \eta_{2k+1} \dd s,
}
where the profile $v:=e^{-it\pd_x^2}w$, and
the phase function
\EQ{
	\Phi:=\xi^2\pm\eta_{1}^2\pm\cdots\pm \eta_{2k}^2+\eta_{2k+1}^2.
}
We observe that this integral is temporal non-resonant in the sense that $\Phi \thicksim \eta_{2k+1}^2$. This is the main reason why we use the gauge transform to turn the nonlinearity $|w|^{2k}w_x$ into $|w|^{2k}\wb w_x$. After integrating by parts in $s$, we obtain several terms in the form of multilinear operator $T$ (called the normal form transform):
\EQ{
& T(w_{low},w_{low},\cdots,w_{low},w_{high}) \\
& := \F_\xi^{-1} \int \int_{\xi=\sum_{j=1}^{2k+1}\eta_j} \frac{1}{i\Phi} \wh{w}_{low}(\eta_1) \cdots \wh{w}_{low}(\eta_{2k}) \wh{\pd_x^2\wb{w}_{high}}(\eta_{2k+1})\dd \eta_1 \cdots \dd \eta_{2k} \dd \eta_{2k+1}.
}
By the Coifman-Meyer Theorem, the normal form can roughly be viewed as
\EQ{
T(w_{low},\cdots,w_{low},w_{high}) &\sim \jb{\nabla}^{-2} (w_{low}\cdots w_{low}\pd_{x}^2w_{high})\\
&\sim  w_{low}\cdots w_{low}w_{high},
}
where the $\jb{\nabla}^{-2}$ is derived due to the factor $1/\Phi$. Then, there is no derivative loss in $T$.

Unfortunately, the application of normal form method on the final data problem also has two more difficulties:
\begin{itemize}
\item
$T$ can not be defined for $t=\I$.
\item
The boundary term $T$ can not gain any small factor in the energy space near the infinite time.
\end{itemize}
We overcome the first problem by approximating the solution $w$ by a sequence of solution $w_n$ with Cauchy data at $t=t_n$, which tends to infinity. For the second problem, we modify the definition of normal form transform by inserting a high frequency cutoff:
\EQ{
\wt T(w_{low},w_{low},\cdots,w_{low},w_{high}) := \sum_{N\ge N_0} P_N T(w_{low},w_{low},\cdots,w_{low},w_{high}).
}
Using this new transform, we can obtain a small factor $N_0^{-\frac 12}$ from the regularity room by the Sobolev embedding, while the low frequency case when $N\leq N_0$ can be also bounded easily without the use of normal form transform. For the precise definition of normal form transform in this paper, please see the details in Definition \ref{defn:normal-form}. For the previous results on the application of normal form method on the scattering theory in energy space, we refer the readers to the scattering of the 3d radial Zakharov system by Guo, Nakanishi, and Wang \cite{Guo-Nakanishi-IMRN, Guo-Nakanishi-Wang-Adv}.

\subsection{Organization of the paper} The rest of the paper is organized as follows. In Section 2, we give some basic notations, lemmas, the gauge transform and the normal form transform that will be used in this paper. In Section 3, we give some necessary nonlinear estimates. In Section 4, we give the proof of Theorem \ref{main theorem1}.

\section{Preliminary}\label{sec:notations}

\subsection{Notations}
For any $a\in \mathbb{R}$, $a\pm:=a\pm\epsilon$ for arbitrary small $\epsilon>0$. For any $z\in \mathbb{C}$, we define $\mbox{Re}z$ and $\mbox{Im}z$ as the real and imaginary part of $z$, respectively. Moreover, $\langle \cdot\rangle =(1+|\cdot|^2)^{\frac 12}$. We write $X \lesssim Y$ or $Y \gtrsim X$ to indicate $X \leq CY$ for
some constant $C>0$. If $X \leq CY$ and $Y \leq CX$, we write $X\sim Y$. Throughout the whole paper, the letter $C$ will denote suitable positive constant that may vary from line to line. We define the Fourier transform of $f$ and Fourier inverse transform of $g$ by:
\begin{align*}
\hat{f}(\xi)=\mathscr{F}f(\xi):=\int_{\R^d}e^{-ix\cdot \xi}f(x)dx\text{, and }\mathscr{F}^{-1}g(x):=\frac{1}{(2\pi)^d}\int_{\R^d}e^{ix\cdot \xi}g(\xi)d\xi.
\end{align*}
We use the following norms to denote the mixed spaces $L_t^qL_x^r$, that is
\begin{align*}
\|u\|_{L_t^qL_x^r}=\big(\int \|u\|_{L_x^r}^qdt\big)^{\frac{1}{q}}.
\end{align*}
When $q=r$ we abbreviate $L_t^qL_x^q$ as $L_{t,x}^q$.

We also need the usual inhomogeneous Littlewood-Paley decomposition for the dyadic number. We take a cut-off function $\phi \in C_0^{\infty}(0, \infty)$ such that
$$
\phi(r)=\left\{ \aligned
    &1, r\leq 1,
    \\
    &0, r\geq 2.
   \endaligned
  \right.
$$
Then we introduce the spatial cut-off function. For dyadic number $N\in 2^{\mathbb{N}}$, when $N\geq 1$, let $\phi_{\leq N}(r)=\phi(N^{-1}r)$ and $\phi_{N}(r)=\phi_{\leq N}(r)-\phi_{\leq \frac N2}(r)$. We define the Littlewood-Paley dyadic operator
\begin{align*}
f_{\leq N}=P_{\leq N}f:=\mathscr{F}^{-1}(\phi_{\leq N}(|\xi|)\hat{f}(\xi))\text{, and }f_{ N}=P_{ N}f:=\mathscr{F}^{-1}(\phi_{ N}(|\xi|)\hat{f}(\xi)).
\end{align*}
we also define that $f_{\geq N}=P_{\geq N}f:=f-f_{\leq N}$, $f_{\ll N}=P_{\ll N}f$, $f_{\gtrsim N}:=P_{\gtrsim N}f$, $f_{\lesssim N}:=P_{\lesssim N}f$, and $P_{\thicksim N}=f_{ \thicksim N}$.

For any $1\leq p <\infty$, we define $l_{N}^p=l_{N\in 2^{\mathbb{N}}}^p$ by its norm
$$
\|c_N\|_{l_{N\in 2^{\mathbb{N}}}^p}^p:=\sum_{N\in 2^{\mathbb{N}}}|c_N|^p.
$$
For $p =\infty$, we define $l_{N}^{\infty}=l_{N\in 2^{\mathbb{N}}}^{\infty}$ by its norm
$$
\|c_N\|_{l_{N\in 2^{\mathbb{N}}}^{\infty}}:=\max_{N\in 2^{\mathbb{N}}}|c_N|.
$$
In this paper, we also use the following abbreviations
$$
\sum_{N:N\leq N_1}a_N:=\sum_{N\in 2^{\mathbb{N}}: N\leq N_1}a_N.
$$
\subsection{Basic lemmas}

\quad In this section, we state some preliminary estimates that will be used in our later sections. Firstly, we recall the well-known Strichartz's estimates, see \cite{Keel-Tao-1998} for instance.

\begin{lem}[Strichartz's estimates\label{lem:strichartz}]
Let $I\subset \R$ be an interval. For all admissible pair $(q_j,r_j),j=1,2,$  satisfying
\begin{align*}
2\leq q_j,r_j\leq \infty\quad and\quad \frac 2 q_j=\frac12-\frac 1r_j,
\end{align*}
the following estimates hold:
\begin{align}\label{Strichartz1}
\|e^{it\pd_x^2}f \|_{L_t^{q_j} L_x^{r_j}(I\times \R)} \lesssim  \| f  \|_{L^2(\R)};
\end{align}
and
\begin{align}\label{Strichartz2}
\Big\|\int_0^t e^{i(t-t')\pd_x^2} F(t',x)\dt'\Big\|_{L_t^{q_1} L_x^{r_1}(I\times \R)}
\lesssim\|F\|_{L_t^{q_2'} L_x^{r_2'}(I\times \R)},
\end{align}
where $\frac 1 {q_2}+\frac 1{q_2'}=\frac 1 {r_2}+\frac 1{r_2'}=1$.
\end{lem}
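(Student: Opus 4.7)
The plan is to give the classical Strichartz proof for the 1D free Schrödinger propagator via the dispersive estimate plus a $TT^*$ argument, followed by fractional integration in time; since this is the standard Strichartz lemma (Keel--Tao \cite{Keel-Tao-1998}) I only sketch the main ingredients in the order I would present them.

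First I would record the two fundamental estimates for $S(t):=e^{it\pd_x^2}$: the $L^2$ isometry $\|S(t)f\|_{L^2_x}=\|f\|_{L^2_x}$, which is Plancherel, and the dispersive estimate $\|S(t)f\|_{L^\I_x}\lsm |t|^{-1/2}\|f\|_{L^1_x}$, which follows from the explicit oscillatory kernel $S(t)f(x)=(4\pi i t)^{-1/2}\int e^{i|x-y|^2/(4t)}f(y)\,\dd y$. Complex interpolation between these two bounds then yields, for every $2\leq r\leq \I$,
\EQ{
\|S(t)f\|_{L^r_x}\lsm |t|^{-\alpha(r)}\|f\|_{L^{r'}_x},\qquad \alpha(r):=\tfrac12-\tfrac1r.
}

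Next I would run the standard $TT^*$ argument. Set $Tf(t,x):=S(t)f(x)$. Then $TT^*F(t,x)=\int_\R S(t-s)F(s,\cdot)(x)\,\dd s$, so the pointwise-in-time dispersive estimate yields
\EQ{
\|TT^*F\|_{L^{q_1}_t L^{r_1}_x}\lsm \Big\|\int_\R |t-s|^{-\alpha(r)}\|F(s)\|_{L^{r'}_x}\,\dd s\Big\|_{L^{q_1}_t},
}
under the matching condition $r_1=r_2=r$. Applying the Hardy--Littlewood--Sobolev inequality in $t$ with exponents $(q',q)$ and kernel of order $\alpha(r)$ requires $\frac{1}{q'}-\frac{1}{q}=1-\alpha(r)$, i.e.\ $\frac{2}{q}=\frac{1}{2}-\frac{1}{r}$, which is precisely the admissibility condition in the statement. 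Thus $TT^*:L^{q'}_tL^{r'}_x\to L^q_tL^r_x$ is bounded, and by the standard duality $T^*T=|T|^2$ trick this is equivalent to $T:L^2_x\to L^q_tL^r_x$, which is \eqref{Strichartz1}. Taking $(q_1,r_1)\neq (q_2,r_2)$ in \eqref{Strichartz2} then follows by combining $T:L^2\to L^{q_1}_tL^{r_1}_x$ with $T^*:L^{q_2'}_tL^{r_2'}_x\to L^2$.

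To pass from the full-line convolution to the retarded integral $\int_0^t$ appearing in \eqref{Strichartz2}, I would invoke the Christ--Kiselev lemma, which is applicable whenever $q_1>q_2'$, i.e.\ away from the double endpoint $q_1=q_2=2$; since in 1D the endpoint is $(q,r)=(4,\I)$ with $q=4>2$, this case is covered without difficulty and one does not need the full Keel--Tao machinery. The potential subtlety, and the only step I would flag, is the endpoint $(q,r)=(4,\I)$: here the kernel $|t-s|^{-1/2}$ sits at the boundary of HLS, but the exponents $(q',q)=(4/3,4)$ lie in the open range $1<q'<q<\I$ so HLS applies directly, and the $L^\I_x$ endpoint is harmless in 1D because the dispersive decay has exponent $1/2<1$. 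With these ingredients assembled, both \eqref{Strichartz1} and \eqref{Strichartz2} are established for all admissible pairs in the stated range.
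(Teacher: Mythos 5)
The paper does not prove this lemma at all: it is stated as a standard fact with a citation to Keel--Tao \cite{Keel-Tao-1998}, so there is no in-paper argument to compare against. Your sketch is the standard and correct route (dispersive estimate, $TT^*$ with Hardy--Littlewood--Sobolev under the 1D admissibility condition, duality for mixed pairs, and Christ--Kiselev for the retarded integral, which applies since every admissible pair in 1D has $q_1\ge 4>4/3\ge q_2'$), and it is exactly the argument the cited reference covers.
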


\begin{lem}[Bernstein estimates\label{lem:Bernstein}]
For $1\leq p \leq q \leq \infty$,
\begin{align*}
\||\nabla|^{\pm s}P_N f\|_{L_x^p(\R^d)}&\thicksim N^{\pm s}\|P_N f\|_{L_x^p(\R^d)},\\
\|P_N f\|_{L_x^q(\R^d)}&\lesssim  N^{\frac dp-\frac dq}\|P_N f\|_{L_x^p(\R^d)}.
\end{align*}
\end{lem}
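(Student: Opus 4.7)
The plan is to reduce both estimates to the convolution representation of the Littlewood--Paley projector and then invoke Young's inequality together with a scaling computation. Let $\psi\in C_0^\infty(\R^d)$ be a fixed radial bump equal to $\phi_1(|\cdot|)$ (so that $P_1 f = \psi(D) f$), and write the general projector via the kernel $K_N(x) := N^d K(Nx)$ where $K := \F^{-1}\psi$ is Schwartz. Then $P_N f = K_N * f$ for every dyadic $N$. The key scaling input is the identity $\|K_N\|_{L_x^r(\R^d)} = N^{d(1-1/r)}\|K\|_{L_x^r(\R^d)}$ for every $1 \le r \le \infty$, which follows from a change of variables and the Schwartz decay of $K$ guaranteeing $\|K\|_{L_x^r} < \infty$.

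First I would prove the $L^p \to L^q$ bound. By Young's convolution inequality applied with the Hölder exponent $r$ defined by $1 + 1/q = 1/p + 1/r$, one has
\begin{align*}
\|P_N f\|_{L_x^q} = \|K_N * f\|_{L_x^q} \lesssim \|K_N\|_{L_x^r} \|f\|_{L_x^p} \lesssim N^{d/p - d/q} \|f\|_{L_x^p}.
\end{align*}
Replacing $f$ by $P_N f$ (or equivalently using a fattened projector $\widetilde P_N$ with $\widetilde P_N P_N = P_N$) yields the stated form with $\|P_N f\|_{L^p}$ on the right-hand side.

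For the derivative equivalence, I would introduce $\widetilde \phi_N$ supported in $|\xi|\sim N$ with $\widetilde \phi_N \equiv 1$ on the support of $\phi_N$, so that $P_N = \widetilde P_N P_N$. Then $|\nabla|^{\pm s} P_N f = m_N^{\pm}(D) P_N f$, where $m_N^{\pm}(\xi) := |\xi|^{\pm s} \widetilde\phi_N(|\xi|)$. A scaling gives $m_N^{\pm}(\xi) = N^{\pm s} m_1^{\pm}(\xi/N)$, so the convolution kernel of $m_N^{\pm}(D)$ is $N^{\pm s}$ times a Schwartz function rescaled at frequency $N$; by the $r=1$ case of the Young estimate above, this operator is bounded on $L_x^p$ with norm $\lesssim N^{\pm s}$. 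This produces
\begin{align*}
\||\nabla|^{\pm s} P_N f\|_{L_x^p} \lesssim N^{\pm s} \|P_N f\|_{L_x^p}.
\end{align*}
The reverse direction is obtained by the symmetric argument: write $P_N f = m_N^{\mp}(D)\bigl(|\nabla|^{\pm s} P_N f\bigr)$ using a cutoff version of $|\xi|^{\mp s}$ supported on $|\xi|\sim N$, whose kernel has $L^1$-norm $\lesssim N^{\mp s}$ by the same scaling identity.

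The only subtle point is ensuring the symbol $|\xi|^{\pm s}\widetilde\phi_N$ has a Schwartz-class (or at least integrable) kernel, which is immediate because $\widetilde\phi_N$ is a smooth cutoff away from $\xi=0$ so the symbol is $C_0^\infty$; no further heavy machinery (e.g.\ Mikhlin multiplier theorem) is required. Since the paper restricts to $N\in 2^{\N}$ so that $N\ge 1$, the low-frequency singularity of $|\xi|^{-s}$ never enters, and the argument is uniform in $N$. There is no substantive obstacle here; the proposal is essentially to cite Young's inequality applied to the explicitly rescaled Schwartz kernels associated to $\phi_N$ and $|\xi|^{\pm s}\phi_N$.
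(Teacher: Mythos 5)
Your argument is correct and is the canonical proof of Bernstein's inequalities: Young's inequality applied to the rescaled Schwartz kernel $K_N(x)=N^dK(Nx)$ for the $L^p\to L^q$ bound, and exact scaling of the compactly supported symbol $|\xi|^{\pm s}\widetilde\phi_N$ (together with the fattened projector trick for the reverse inequality) for the derivative equivalence. The paper states this lemma as a standard preliminary without proof, so there is nothing to compare against; your write-up fills that gap correctly, and the one point you flag — that the annular support of $\phi_N$ keeps the singularity of $|\xi|^{-s}$ at the origin out of play and makes the constants uniform in $N$ — is exactly the right thing to check.
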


\begin{lem}[Llittlewood-Paley estimates\label{lem:littlewood-Paley}]
For $1< p <\infty$ and $f\in L_x^p(\R^d)$. Then, we have
\begin{align*}
\|f_N\|_{L_x^pl_N^2}\thicksim_p\|f\|_{L_x^p}.
\end{align*}
\end{lem}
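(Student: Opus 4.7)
The plan is to establish the square function equivalence by the classical randomization argument of Marcinkiewicz--Zygmund and Stein. The idea is to reduce the $\ell_N^2$ norm in $L^p$ to an unconditional $L^p$ bound on randomized Littlewood--Paley sums, then invoke a vector-valued (or scalar) Calder\'on--Zygmund/Mikhlin multiplier theorem.

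First I would invoke Khintchine's inequality to rewrite, for each fixed $x$,
\EQ{
\normb{(f_N(x))_{N\in 2^{\N}}}_{\ell_N^2} \sim_p \brk{\E_\omega \absbb{\sum_{N\in 2^{\N}} r_N(\omega) f_N(x)}^p}^{1/p},
}
where $(r_N)$ is a sequence of independent Rademacher variables on some probability space $(\Omega,\E_\omega)$. Taking $L_x^p$ and applying Fubini, it suffices to prove that for every realization $\omega$,
\EQ{
\normbb{\sum_{N\in 2^{\N}} r_N(\omega) P_N f}_{L_x^p(\R^d)} \lsm_p \norm{f}_{L_x^p(\R^d)},
}
with an implicit constant uniform in $\omega$. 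This is the heart of the argument.

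The operator $T_\omega f := \sum_N r_N(\omega) P_N f$ is a Fourier multiplier with symbol $m_\omega(\xi) := \sum_N r_N(\omega) \phi_N(|\xi|)$. Because the $\phi_N$ are smooth dyadic bumps supported in annuli $|\xi|\sim N$ that are almost-disjoint (with bounded overlap), at any point $\xi\ne 0$ only a bounded number of terms contribute, and one checks directly that
\EQ{
|\xi|^{|\alpha|}\abs{\pd^\alpha m_\omega(\xi)} \lsm_\alpha 1 \quad \text{for every multi-index }\alpha,
}
with constants independent of $\omega$ because $|r_N|=1$. The Mikhlin multiplier theorem then gives $\norm{T_\omega f}_{L^p}\lsm_p \norm{f}_{L^p}$ for $1<p<\I$, uniformly in $\omega$. (Alternatively, one verifies that the convolution kernel of $T_\omega$ is a standard Calder\'on--Zygmund kernel and appeals to the scalar CZ theorem; the smoothness and size conditions reduce to the Schwartz decay of the kernels of individual $P_N$.) Combining this with the previous step yields $\norm{f_N}_{L_x^p\ell_N^2}\lsm_p \norm{f}_{L_x^p}$.

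For the reverse inequality I would use duality. Writing the inhomogeneous decomposition so that $f = \sum_{N} P_N f$ with the sum essentially over $N\ge 1$ (and with $P_{\le 1}f$ handled separately as a single bounded term), one pairs against $g\in L_x^{p'}$ and inserts fattened projectors $\wt P_N$ satisfying $P_N = P_N\wt P_N$:
\EQ{
\abs{\jb{f,g}} = \absbb{\sum_N \jb{P_N f,\wt P_N g}} \le \int_{\R^d} \normb{(P_N f)_N}_{\ell_N^2}\normb{(\wt P_N g)_N}_{\ell_N^2}\dx.
}
Applying H\"older in $x$ and the already-established forward direction at the exponent $p'\in (1,\I)$ to $g$ gives $\abs{\jb{f,g}}\lsm_p \norm{(P_N f)_N}_{L_x^p\ell_N^2}\norm{g}_{L_x^{p'}}$, whence $\norm{f}_{L_x^p}\lsm_p \norm{f_N}_{L_x^p\ell_N^2}$ by taking the supremum over $g$.

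The main potential obstacle is purely technical: verifying uniformity of the Mikhlin condition for the randomized multipliers $m_\omega$. Since the $\phi_N$ come from a single bump function dilated dyadically with bounded overlap, each derivative $\pd^\alpha\phi_N(|\xi|)$ is bounded by $C_\alpha N^{-|\alpha|}\lsm |\xi|^{-|\alpha|}$ on its support, and the bounded overlap plus $|r_N|\le 1$ ensures the sum defining $m_\omega$ satisfies the same estimate with constants independent of $\omega$. Once this is in hand, the rest is Khintchine plus duality.
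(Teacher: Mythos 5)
Your argument is the standard Khintchine--Mikhlin proof of the Littlewood--Paley square function theorem (randomization to reduce to a uniformly bounded family of multipliers, then duality with fattened projectors for the reverse inequality), and it is correct; the only caveat, which you already address, is treating the lowest inhomogeneous block $P_{\le 1}$ separately so that the Mikhlin condition holds near the origin. The paper itself states this lemma as a classical fact without proof, so there is no authorial argument to compare against; your route is the canonical one.
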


\begin{lem}[Multilinear Coifman-Meyer multiplier estimates, see \cite{Co-Me-91}\label{lem:Coifman-Meyer}]
Let the function $m$ on ${(\R^n)^k}$ is bounded and let $T_m$ be the corresponding m-linear multiplier operator on $\R^n (n\geq 1)$
\begin{align*}
T_m(f_1,\cdots, f_k)(x)=\int_{(\R^n)^k}m(\eta_1,\cdots,\eta_k)\hat{f_1}(\eta_1)\cdots\hat{f_1}(\eta_k)e^{ix(\eta_1+\cdots+\eta_k)}d\eta_1\cdots d\eta_k.
\end{align*}
 If $L$ is sufficiently large and $m$ satisfies
\begin{align*}
\Big|\partial_{\eta_1}^{\al_1}\cdots\partial_{\eta_k}^{\al_k}m(\eta_1,\cdots,\eta_k)\Big|\lesssim_{\al_1,\cdots, \al_k}(|\eta_1|+\cdots+|\eta_k|)^{-(|\al_1|+\cdots+|\al_k|)},
\end{align*}
for multi-indices $\al_1,\cdots, \al_k$ satisfying $|\al_1|+\cdots+|\al_k|\leq L$. Then, for $1< p, p_1, \cdots, p_k\leq \infty$ and $\frac 1p=\frac{1}{p_1}+\cdots+\frac{1}{p_k}$, we have
\begin{align*}
\|T_m(f_1,\cdots, f_k)\|_{L_x^p}\leq C\|f_1\|_{L_x^{p_1}}\cdots\|f_k\|_{L_x^{p_k}}.
\end{align*}
\end{lem}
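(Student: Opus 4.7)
The plan is to reduce the multilinear multiplier bound to a paraproduct decomposition and then to the boundedness of a Calder\'on--Zygmund type object, combined with the vector-valued Fefferman--Stein maximal inequality to handle the factors not carrying the main frequency. First I would perform a dyadic decomposition on each variable $\eta_j$ and regroup according to which frequency is largest: writing $m=\sum_{N_1,\ldots,N_k} m_{N_1,\ldots,N_k}$ with $m_{N_1,\ldots,N_k}:=m(\eta)\prod_j \phi_{N_j}(|\eta_j|)$, the hypothesis $|\partial^\alpha m|\lesssim (|\eta_1|+\cdots+|\eta_k|)^{-|\alpha|}$ shows that on each product of annuli $\{|\eta_j|\sim N_j\}$, the rescaled symbol $m_{N_1,\ldots,N_k}(N_1\cdot,\ldots,N_k\cdot)$ is a $C^L$-function with uniformly bounded derivatives up to order $L$. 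Thus on each such product of annuli we may expand $m_{N_1,\ldots,N_k}$ in an absolutely convergent Fourier series
\[
m_{N_1,\ldots,N_k}(\eta_1,\ldots,\eta_k)=\sum_{\vec\nu\in(\Z^n)^k} c_{\vec\nu}^{N_1,\ldots,N_k}\,\prod_{j=1}^k e^{i\nu_j\cdot \eta_j/N_j}\,\widetilde\phi_{N_j}(\eta_j),
\]
where $\widetilde\phi_{N_j}$ is a fattened bump and the coefficients satisfy $|c_{\vec\nu}^{N_1,\ldots,N_k}|\lesssim \jb{\nu_1}^{-L}\cdots \jb{\nu_k}^{-L}$. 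This separates the variables and replaces $T_{m_{N_1,\ldots,N_k}}(f_1,\ldots,f_k)$ by an absolutely summable combination of products $\prod_j \bigl(\widetilde P_{N_j}f_j\bigr)(x+\nu_j/N_j)$.

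Next I would sum over the dyadic tuples by decomposing into $O(k)$ paraproduct-type regions according to which $N_{j_0}$ is the maximum (ties split by a fixed rule). In each such region one has $N_{j_0}\sim \max_j N_j$ and the remaining $N_j\lesssim N_{j_0}$, so the sum over $N_j\lesssim N_{j_0}$ is a truncation and produces $\widetilde P_{\lesssim N_{j_0}}f_j$. After translating by $\nu_j/N_j$ and crudely bounding the pointwise translates by the Hardy--Littlewood maximal function $Mf_j$ (which handles all the $\nu_j$ summations uniformly thanks to $L$ being sufficiently large), the operator in each region is dominated by
\[
S(f_1,\ldots,f_k)(x):=\Bigl\|\widetilde P_{N_{j_0}}f_{j_0}(x)\prod_{j\ne j_0}Mf_j(x)\Bigr\|_{\ell^2_{N_{j_0}}}.
\]
Pulling the $\ell^2_{N_{j_0}}$ inside by H\"older gives
\[
S(f_1,\ldots,f_k)\lesssim \Bigl(\sum_{N_{j_0}}|\widetilde P_{N_{j_0}}f_{j_0}|^2\Bigr)^{1/2}\prod_{j\ne j_0}Mf_j .
\]

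Finally I would take $L_x^p$ norms and apply H\"older with exponents $\frac1p=\sum_j \frac1{p_j}$: the square function factor is controlled by $\|f_{j_0}\|_{L^{p_{j_0}}}$ through the Littlewood--Paley estimate (Lemma~\ref{lem:littlewood-Paley}), while each $\|Mf_j\|_{L^{p_j}}\lesssim \|f_j\|_{L^{p_j}}$ by Hardy--Littlewood maximal boundedness for $1<p_j\le\infty$ (interpreting the $L^\infty$ case trivially since $\|Mf\|_{L^\infty}\le\|f\|_{L^\infty}$). Summing over the $O(k)$ choices of $j_0$ yields the claim. The main obstacle I anticipate is the endpoint case when some $p_j=\infty$: the square-function/paraproduct argument only requires $1<p<\infty$ on the output, so this constraint is exactly what allows us to use Calder\'on--Zygmund theory for the frequency-localized factor carrying the largest scale, while the $L^\infty$ inputs are absorbed by the maximal function. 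The selection of $L$ in the hypothesis is tied to making the $\jb{\nu_j}^{-L}$ sums converge and to controlling the translates by $M$; taking $L>kn$ suffices.
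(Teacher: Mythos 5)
First, a point of comparison: the paper does not prove this lemma at all --- it is quoted with a citation to Coifman--Meyer and used as a black box --- so there is no in-paper argument to measure yours against. Your sketch follows the standard modern proof of the Coifman--Meyer theorem (dyadic decomposition of the symbol, separation of variables by Fourier series on rescaled annuli with rapidly decaying coefficients, reduction to paraproducts, then maximal and square function estimates), and that overall architecture is the right one.

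Two steps, however, have genuine gaps as written. First, in the region where the maximum is attained by two or more comparable frequencies $N_{j_0}\sim N_{j_1}$ (the high--high paraproduct), the output of the $N_{j_0}$-th piece is \emph{not} frequency-localized at scale $N_{j_0}$: it can live anywhere in $\{|\xi|\lesssim N_{j_0}\}$. Consequently the passage from $\|\sum_{N_{j_0}}g_{N_{j_0}}\|_{L^p}$ to your quantity $S$, which is an $\ell^2_{N_{j_0}}$ norm of the individual pieces, is neither a pointwise domination nor a consequence of Littlewood--Paley applied to the output; the standard repair is to place square functions on \emph{both} comparable-frequency factors and use Cauchy--Schwarz in $N_{j_0}$, or to dualize against $g\in L^{p'}$. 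Second, your final H\"older step puts the square function on the factor $f_{j_0}$ carrying the largest frequency, but nothing prevents that factor from being one with $p_{j_0}=\infty$, and the estimate $\|(\sum_N|\widetilde P_Nf|^2)^{1/2}\|_{L^\infty}\lesssim\|f\|_{L^\infty}$ is false; one must rearrange (again by duality) which function receives the square function so that it lands on a factor with a finite exponent. Separately, a caveat about the statement rather than your proof: the case $p=\infty$ (forcing all $p_j=\infty$) asserted in the lemma is not true for general Coifman--Meyer multipliers --- already for $k=1$ it would make every H\"ormander--Mikhlin multiplier bounded on $L^\infty$ --- and your own argument correctly requires $1<p<\infty$ on the output; in the paper's applications the output exponent is always finite, so nothing downstream is affected.
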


\subsection{Gauge transform}

Let $u$ be the solution to \eqref{GDNLS}, namely
\EQ{
i\pd_t u +\pd_x^2 u +i|u|^{2\si} u_x=0,
}
with $\si>0$. Let $a\in\R$ and define
\EQn{\label{eq:gauge-transform-general}
w(t,x):=e^{-ia\int_{-\I}^x |u(t,y)|^{2\si} \dd y} u(t,x).
}
Then, we have that $w(t,x)$ solves
\EQn{\label{eq:gdnls-gauge-general}
i\pd_t w +\pd_x^2 w + & 2a\si(\si-1)w\cdot \im \int_{-\I}^x \brkb{|w|^{2\si-4}(\wb w \cdot\pd_y w)^2}(t,y) \dd y\\
+ & a(a\si-\frac12) |w|^{4\si}w+2a\si i|w|^{2\si-2} w^2 \pd_x\wb w + (2a+1)i|w|^{2\si} \pd_x w=0.
}

When $\si=1$, this formula coincides the classical gauge transform for \eqref{DNLS}. When $a=1/2$, then $w=e^{-\frac i2\int_{-\I}^x |u(y)|^{2\si} \dd y} u$ and
\EQ{
i\pd_t w + \pd_x^2 w + i\pd_x(|w|^2w) =0.
}
When $a=-1/2$, then $w=e^{\frac i2\int_{-\I}^x |u(y)|^{2\si} \dd y} u$ and
\EQ{
i\pd_t w + \pd_x^2 w - i|w|^2\pd_x\wb w + \frac12|w|^4w =0.
}

In this paper, we use the gauge transform in \eqref{eq:gauge-transform-general} with $a=-1/2$. This removes the troublesome term $|w|^{2\si} \pd_x w$. Therefore, the equation  \eqref{eq:gdnls-gauge-general} reads
\EQn{\label{eq:gdnls-gauge}
i\pd_t w +\pd_x^2 w = & \si(\si-1)w\cdot \im \int_{-\I}^x \brkb{|w|^{2\si-4}(\wb w \cdot\pd_y w)^2}(t,y) \dd y\\
& - \frac{\si+1}{4} |w|^{4\si}w + i \si |w|^{2\si-2} w^2 \pd_x\wb w.
}

\subsection{Normal form transform}
Next, we apply the normal form transform to the equation \eqref{eq:gdnls-gauge} with $\si=k$ and
\EQ{
k\in\N\text{, and }k\ge 3.
}
Denote that
\EQn{\label{eq:nonlinear-term-gauge}
F_1 (w) := & k(k-1)w\cdot \im \int_{-\I}^x \brkb{|w|^{2k-4}(\wb w \cdot\pd_y w)^2}(t,y) \dd y;\\
\quad F_2(w):=& - \frac{k+1}{4} |w|^{4k}w;\\
F_3(w) :=& i k|w|^{2k-2} w^2 \pd_x\wb w.
}
Now, we consider the Cauchy problem
\EQn{
\label{eq:gdnls-gauge-cauchy}
\left\{ \begin{aligned}
&i\pd_t w + \pd_x^2 w = F_1(w) + F_2(w) + F_3(w), \\
& w(t_0) = w_0.
\end{aligned}
\right.
}
Then, the integral equation for \eqref{eq:gdnls-gauge-cauchy} is
\EQn{\label{eq:gdnls-gauge-integral}
w(t,x) = e^{i(t-t_0)\pd_x^2} w_0(x) - i\int_{t_0}^t e^{i(t-s)\pd_x^2}(F_1+F_2+F_3)(w(s,x)) \dd s.
}

Next, we define the normal form transform with respect to some suitable frequency part of the nonlinear term $F_3$.
\begin{definition}[Normal form transform]
\label{defn:normal-form}
Let $k\in\N$ with $k\ge3$ and $\xi,\eta_j\in\R$ for $j=1,2,3,\cdots,2k+1$. Denote that $\vec \eta=(\eta_1,\eta_2,\eta_3,\cdots,\eta_{2k+1})\in\R^{2k+1}$. Let $\iota_j\in\fbrk{+1,-1}$ for $j=1,2,3,\cdots,2k+1$ such that
\EQ{
\iota_{2n-1} := +1\text{, } \iota_{2n} :=-1 \text{, for }n=1,2,\cdots,k-1,
}
and
\EQ{
\iota_{2k-1}=\iota_{2k}:=+1\text{, }\iota_{2k+1}:=-1.
}
Define the hypersurface
\EQ{
\Ga_{\xi}(\vec \eta) := \fbrkb{\vec \eta\in\R^{2k+1}:\xi=\sum_{j=1}^{2k+1}\eta_j}.
}
The phase function is defined by
\EQ{
\Phi(\vec\eta):=(\sum_{j=1}^{2k+1}\eta_j)^2- \sum_{j=1}^{2k+1}\iota_j \eta_j^2.
}
Denote the multiplier
\EQ{
m_N(\vec\eta):=\frac{\eta_{2k+1}}{\Phi(\vec\eta)} \cdot \prod_{j=1}^{2k} \phi_{\ll N}(|\eta_j|) \cdot\phi_N(|\eta_{2k+1}|).
}
For any function $f$, we define
\EQ{
f^{(+1)} := f\text{, and }f^{(-1)} := \wb f.
}
Using the above notations, we give the following definitions:
\begin{enumerate}
\item (Normal form transform)
Let $N_0\in 2^\N$. Now, we define the normal form transform for functions $f_1,f_2,\cdots,f_{2k+1}$ by
\EQ{
\Om(f_1,f_2,\cdots,f_{2k+1})(x) := \sum_{N:N\ge N_0} \int_{\Ga_\xi(\vec\eta)} e^{ix\brkb{\sum_{j=1}^{2k+1}\eta_j}} m_N(\vec \eta) \prod_{j=1}^{2k+1} \wh{f_j^{(\iota_j)}}(\eta_j) \dd \vec\eta.
}
\item (Simplified notation for normal form)
If $f_1=f_2=\cdots=f_{2k+1}$, we denote that
\EQ{
	\Om(f):=\Om(f_1,f_2,\cdots,f_{2k+1})=\Om(f,f,\cdots,f).
}
If $f_j=f$ for all $1\le j\le 2k+1$ except for $j=l$, and $f_l=(-1)^lg$, then we denote that
\EQ{
	\Om_l(f,g):=\Om(f_1,f_2,\cdots,f_{2k+1}).
}
\item (Resonance term)
Finally, we also define the resonance part of the nonlinear term by
\EQ{
R(w):= & \sum_{N\in2^\N}(|w|^{2k-2} w^2 - |w_{\ll N}|^{2k-2} w_{\ll N}^2)\pd_x\wb w_N \\
& + \sum_{N:N\le N_0} |w_{\ll N}|^{2k-2} w_{\ll N}^2\pd_x\wb w_N.
}
\end{enumerate}

\end{definition}
\begin{remark}
We can check that $Nm_N$ satisfies the conditions of Coifman-Meyer's multiplier in Lemma \ref{lem:Coifman-Meyer}. Particularly, on the support of $m_N$, we have
\EQ{
\Phi(\vec\eta) \sim N^{2}\text{, and }|m_N(\vec\eta)| \sim N^{-1} \prod_{j=1}^{2k} \phi_{\ll N}(|\eta_j|) \cdot\phi_N(|\eta_{2k+1}|).
}
\end{remark}
Using the notations in Definition \ref{defn:normal-form}, we can transform the equation \eqref{eq:gdnls-gauge-integral} into
\EQn{\label{eq:gdnls-normal-form}
w(t,x) = & e^{i(t-t_0)\pd_x^2} w_0(x) - i\int_{t_0}^t e^{i(t-s)\pd_x^2}(F_1+F_2)(w(s,x)) \dd s  \\
& - ke^{i(t-t_0)\pd_x^2} \Om(w_0(x)) + k\Om(w(t,x))+ k\int_{t_0}^t e^{i(t-s)\pd_x^2} R(w(s,x)) \dd s \\
& -ik\sum_{j=1}^{2k+1}\int_{t_0}^t e^{i(t-s)\pd_x^2}\Om_j(w,(F_1+F_2+F_3)(w))(s,x) \dd s.
}
This formula can be derived by integrating in $s$ after transforming the equation \eqref{eq:gdnls-gauge-integral} into Fourier space. The procedure is classical, so we omit the details.

\section{Nonlinear Estimates}

We first define the auxiliary spaces $X(I)$ and $Y(I)$ for $I\subset\R$ by the following norms,
\begin{align}
\|w\|_{X(I)}:=\|w\|_{L_t^{\infty}H_x^1(I\times \R)} + \|\langle \partial_x\rangle w\|_{L_t^{4}L_x^{\infty}(I\times \R)},
\end{align}
and
\begin{align}
\|w\|_{Y(I)}:=\|\langle \partial_x\rangle w\|_{L_{t,x}^{6}(I\times \R)}.
\end{align}
In the proofs of the following lemmas, we always restrict the variables on $(t,x)\in I\times\R$. We also omit the dependence on $k$, namely writing $C$ for $C(k)$.
\subsection{Easier nonlinear terms}
\begin{lem}[Estimates for $F_1$ and $F_2$]\label{lem:nonlinear-estimate-f1f2}
Let $I \subset\R$ be an interval containing $t_0$.
Then,
\EQn{\label{eq:nonlinear-estimate-f1}
\normbb{\int_{t_0}^t e^{i(t-s)\pd_x^2}F_1(w(s)) \dd s}_{X\cap Y(I)} \lsm \|w\|_{X(I)}^{2k-5}\|w\|_{Y(I)}^6+\|w\|_{X(I)}^{2k-2}\|w\|_{Y(I)}^3,
}
and
\EQn{\label{eq:nonlinear-estimate-f2}
\normbb{\int_{t_0}^t e^{i(t-s)\pd_x^2}F_2(w(s)) \dd s}_{X\cap Y(I)} \lsm \|w\|_{X(I)}^{4k-2}\|w\|_{Y(I)}^3.
}
\end{lem}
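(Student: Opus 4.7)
The plan is to apply Strichartz's inequality (Lemma~\ref{lem:strichartz}) to reduce the bound on the Duhamel integral in $X\cap Y$ to a single dual-Strichartz norm of the nonlinearity (and of its first derivative, to handle the $H_x^1$ and $\langle\pd_x\rangle$ components of $X\cap Y$). I would target $L_t^1 L_x^2$, the dual of the admissible pair $(\I,2)$, since by Strichartz this simultaneously controls $\|u\|_{L_t^\I H_x^1\cap L_t^4 L_x^\I\cap L_{t,x}^6}$. The Hölder decompositions will use the building blocks $L_t^\I L_x^2$, $L_t^4 L_x^\I$ (both in $X$), $L_t^\I L_x^\I$ (controlled by $X$ via the 1D Sobolev embedding $H_x^1\hookrightarrow L_x^\I$), and $L_{t,x}^6$ with or without $\pd_x$ (the $Y$ norm). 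The total factor count must match the degree $2k+1$ of $F_1$ and $4k+1$ of $F_2$.

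For $F_2=-\tfrac{k+1}{4}|w|^{4k}w$: the Leibniz rule gives $|\langle\pd_x\rangle F_2|\lsm |w|^{4k}|\langle\pd_x\rangle w|$, and I would put $\langle\pd_x\rangle w$ together with two copies of $w$ in $L_{t,x}^6$ (three factors of $\|w\|_Y$, bringing their product to $L_{t,x}^2$). The remaining $4k-2$ copies of $w$ sit in $L_t^2 L_x^\I$ collectively: each is placed in $L_t^{4k-4}L_x^\I$, which interpolates between $L_t^4 L_x^\I$ and $L_t^\I L_x^\I$ and contributes $\|w\|_X^{4k-2}$.

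For $F_1=k(k-1)\,w\cdot\im\!\int_{-\I}^x|w|^{2k-4}(\bar w\,\pd_y w)^2 \dd y$: the only new feature is the integral, which I would dispatch by the elementary pointwise-in-$t$ bound
$$\Big\|\int_{-\I}^x h(t,y)\,\dd y\Big\|_{L_x^\I}\le \|h\|_{L_x^1},$$
yielding $\|F_1\|_{L_t^1 L_x^2}\lsm \|w\|_X\,\||w|^{2k-2}|\pd_x w|^2\|_{L_{t,x}^1}$ after pulling $\|w\|_{L_t^\I L_x^2}$ out in front. The remaining $L_{t,x}^1$ norm — a product of $2k$ factors — is then split by Hölder in two natural ways, corresponding exactly to the two terms in the conclusion. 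Strategy A: place both $\pd_x w$'s and four copies of $w$ in $L_{t,x}^6$ (six factors of $\|w\|_Y$), and the leftover $2k-6$ copies of $w$ in $L_t^\I L_x^\I$; this yields $\|w\|_X^{2k-5}\|w\|_Y^6$ and uses $k\ge 3$. Strategy B: place one $\pd_x w$ and two copies of $w$ in $L_{t,x}^6$ (three factors of $\|w\|_Y$), the other $\pd_x w$ in $L_t^\I L_x^2\subset X$, and the remaining $2k-4$ copies of $w$ in $L_t^{4(k-2)}L_x^\I$ via interpolation between $L_t^4 L_x^\I$ and $L_t^\I L_x^\I$; this yields $\|w\|_X^{2k-2}\|w\|_Y^3$.

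The derivative case $\langle\pd_x\rangle F_1$ produces the extra term $w\cdot\pd_x G\sim |w|^{2k-1}(\pd_x w)^2$, which is in fact easier than $F_1$ itself (no integral to peel off) and admits the same two Hölder distributions. The main obstacle is simply the bookkeeping: verifying that every time/space exponent sums correctly and that each interpolation lies in $[0,1]$. The hypothesis $k\ge 3$ is precisely what guarantees this — it keeps $2k-6\ge 0$ in Strategy A and $1/(k-2)\le 1$ in Strategy B — with the endpoint $k=3$ saturating both trivially (no interpolation needed) and $k\ge 4$ requiring only standard interpolation between the $L_t^4 L_x^\I$ and $L_t^\I L_x^\I$ norms.
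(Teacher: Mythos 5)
Your proposal is correct and follows essentially the same route as the paper: Strichartz reduction to $\|\langle\pd_x\rangle F_j\|_{L_t^1L_x^2}$, the bound $\|\int_{-\infty}^x h\,dy\|_{L_x^\infty}\le\|h\|_{L_x^1}$ for the antiderivative, and H\"older over the building blocks $L_t^\infty H_x^1$, $L_t^4L_x^\infty$, $L_{t,x}^6$, $L_{t,x}^\infty$, with your Strategies A and B matching the paper's two exponent distributions. Two harmless slips: for $F_2$ the collective $L_t^2L_x^\infty$ placement of $4k-2$ factors requires $L_t^{8k-4}L_x^\infty$ each (or, as the paper does, two factors in $L_t^4L_x^\infty$ and $4k-4$ in $L_{t,x}^\infty$), not $L_t^{4k-4}L_x^\infty$; and when $\pd_x$ hits the integral the resulting $L_t^1L_x^2$ norm only admits the Strategy-B split (Strategy A would overload the spatial exponent), which is all that is needed since that piece is bounded by the second term of the conclusion.
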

\begin{proof}
We first prove \eqref{eq:nonlinear-estimate-f1}.
By Strichartz's estimates, we obtain
\EQnnsub{
\normbb{\int_{t_0}^t e^{i(t-s)\pd_x^2}F_1(w(s)) \dd s}_{X\cap Y(I)} \lsm & \norm{\jb{\pd_x}F_1(w)}_{L_t^1 L_x^2} \nonumber\\
\lsm &  \Big\|w {\rm{Im}}\int_{-\infty}^x(\partial_y w\wb{w})^2|w|^{2k-4}dy\Big\|_{L_t^1 L_x^2} \label{est:nonlinear-estimate-f1-1}\\
& + \Big\|\pd_x\brkb{ w {\rm{Im}}\int_{-\infty}^x(\partial_y w\wb{w})^2|w|^{2k-4}dy }\Big\|_{L_t^1 L_x^2}. \label{est:nonlinear-estimate-f1-2}
}
To prove this, we need the $L_x^\I$-estimate for the spatial integral term. By H\"{o}lder's inequality and Sobolev embedding,
\EQn{\label{8167}
	\Big\|\int_{-\infty}^x(\partial_y w\wb{w})^2|w|^{2k-4}dy\Big\|_{L_t^1L_x^{\infty}}
	\lesssim&\|(\partial_x w\wb{w})^2|w|^{2k-4}\|_{L_{t,x}^1}\\
	\lesssim& \|\partial_x w \|_{L_{t,x}^6}^2\|w \|_{L_{t,x}^6}^4\|w\|_{L_{t,x}^{\infty}}^{2k-6}\\
	\lesssim&\|\partial_x w \|_{L_{t,x}^6}^2\|w \|_{L_{t,x}^6}^4\|w\|_{L_t^{\infty}H_x^1}^{2k-6}\\
	\lesssim& \|w\|_{X(I)}^{2k-6} \|w\|_{Y(I)}^6.
}

Now, we estimate the term \eqref{est:nonlinear-estimate-f1-1}. By H\"older's inequality and \eqref{8167},
\begin{align*}
\eqref{est:nonlinear-estimate-f1-1} \lesssim& \Big\|w \int_{-\infty}^x(\partial_y w\wb{w})^2|w|^{2k-4}dy\Big\|_{L_t^1L_x^2}\\
\lesssim& \|w\|_{L_t^{\infty}L_x^2}\Big\|\int_{-\infty}^x(\partial_y w\wb{w})^2|w|^{2k-4}dy\Big\|_{L_t^1L_x^{\infty}}\\
	\lesssim& \|w\|_{X(I)}^{2k-5}\|w\|_{Y(I)}^6.
\end{align*}
For \eqref{est:nonlinear-estimate-f1-2}, by Sobolev embedding, \eqref{8167}, and H\"older's inequality,
\begin{align*}
\eqref{est:nonlinear-estimate-f1-2} \lesssim& \Big\|\partial_xw \int_{-\infty}^x(\partial_y w\wb{w})^2|w|^{2k-4}dy\Big\|_{L_t^1L_x^2}+\Big\|w (\partial_x w\wb{w})^2|w|^{2k-4}\Big\|_{L_t^1L_x^2}\\
	\lesssim& \|\partial_xw\|_{L_t^{\infty}L_x^2}\Big\|\int_{-\infty}^x(\partial_y w\wb{w})^2|w|^{2k-4}dy\Big\|_{L_t^1L_x^{\infty}}+\|\partial_xw\|_{L_t^{4}L_x^{\infty}}^2\|w \|_{L_{t,x}^6}^3\|w \|_{L_{t,x}^{\infty}}^{2k-4}\\
	\lesssim& \|w\|_{X(I)}^{2k-5}\|w\|_{Y(I)}^6+\|w\|_{X(I)}^{2k-2}\|w\|_{Y(I)}^3.
\end{align*}
Hence, \eqref{eq:nonlinear-estimate-f1} follows by the above two estimates.

For the $F_2$-term, applying Strichartz's estimates, H\"{o}lder's inequality, and Sobolev embedding,
\EQ{
\normbb{\int_{t_0}^t e^{i(t-s)\pd_x^2}F_2(w(s)) \dd s}_{X\cap Y(I)} \lesssim& \big\|\langle\partial_x\rangle(|w|^{4k}w)\big\|_{L_t^1L_x^2}\\
\lesssim& \|w\|_{L_{t,x}^{\infty}}^{4k-4}\| \langle\partial_x\rangle w \|_{L_{t,x}^6}\| w \|_{L_{t,x}^6}^2\|w \|_{L_t^4L_x^{\infty}}^2\\
\lesssim& \|w\|_{X(I)}^{4k-2}\|w\|_{Y(I)}^3.
}
This gives \eqref{eq:nonlinear-estimate-f2}.
\end{proof}

\subsection{Boundary terms}
\begin{lem}[Boundary terms]\label{lem:nonlinear-estimate-boundary}
Let $I \subset\R$ be an interval containing $t_0$. Then, for any $N_0\in2^\N$,
\EQ{
& \normb{e^{i(t-t_0)\pd_x^2} \Om(w_0(x))}_{X\cap Y(I)} \lsm N_0^{-1} \norm{w_0}_{L_x^{\I}}^{2k} \norm{w_0}_{H_x^1}, \\
& \norm{\Om(w)}_{X(I)} \lsm N_0^{-\frac{1}{2}-}\|w\|_{X(I)}^{2k+1}, \\
& \norm{\Om(w)}_{Y(I)} \lsm N_0^{-1} \norm{w}_{X(I)}^{2k} \norm{w}_{Y(I)}.
}
\end{lem}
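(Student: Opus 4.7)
The plan is to dyadically decompose $\Om = \sum_{N\ge N_0} T_N$, where $T_N$ is the multilinear multiplier operator with symbol $m_N$, and to apply the Coifman--Meyer theorem (Lemma \ref{lem:Coifman-Meyer}) on each piece. On the support of $m_N$ one has $|\Phi|\sim N^2$ and $|\eta_{2k+1}|\sim N$, so $|m_N|\sim N^{-1}$, and as noted in the remark following Definition \ref{defn:normal-form} the symbol $N m_N$ satisfies the Coifman--Meyer hypotheses; consequently
\[
\|T_N(f_1,\ldots,f_{2k+1})\|_{L^p} \lsm N^{-1}\prod_{j=1}^{2k+1}\|f_j\|_{L^{p_j}}
\]
for any admissible H\"older split $1/p = \sum 1/p_j$. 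Moreover the output of $T_N$ is spatially frequency-localized at $|\xi|\sim N$, so applying $\pd_x$ to $T_N$ costs only an additional factor $N$, exactly balancing the $N^{-1}$ already present in the symbol. The decisive $N^{-1}$ gain that produces the stated rate in all three estimates will come from the high-frequency factor $w_N$ via $\|w_N\|_{L^p}\lsm N^{-1}\|\pd_x w\|_{L^p}$ or $\|w_N\|_{L^2}\lsm N^{-1}\|w\|_{H^1}$, so that the dyadic sum $\sum_{N\ge N_0}N^{-1}\sim N_0^{-1}$ delivers the claimed factor.

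For the first estimate, Strichartz (Lemma \ref{lem:strichartz}) applied with the $1$-d admissible pairs $(4,\infty)$ and $(6,6)$ reduces the left-hand side to $\|\jb{\pd_x}\Om(w_0)\|_{L_x^2}$. Splitting Coifman--Meyer at exponents $(\infty,\ldots,\infty,2)$ and using $\|w_{0,N}\|_{L^2}\lsm N^{-1}\|w_0\|_{H^1}$ produces the bound
\[
\|\jb{\pd_x} T_N(w_0)\|_{L_x^2} \lsm \|w_0\|_{L^\infty}^{2k}\cdot N^{-1}\|w_0\|_{H^1},
\]
and summing $\sum_{N\ge N_0}N^{-1}\sim N_0^{-1}$ yields the claim.

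For the second estimate, the $L_t^\infty H_x^1$ part is handled as in the first estimate, pointwise in $t$, producing $N_0^{-1}\|w\|_X^{2k+1}$ (which is stronger than the claimed $N_0^{-1/2-}$). For the $L_t^4 L_x^\infty$ part, apply Coifman--Meyer at fixed $t$ with all inputs in $L_x^\infty$, distribute $L_t^4$ by H\"older so that $w_N$ sits in $L_t^4 L_x^\infty$ and the remaining $2k$ factors of $w_{\ll N}$ sit in $L_t^\infty L_x^\infty\lsm L_t^\infty H_x^1$, and extract $N^{-1}$ from $\|w_N\|_{L_t^4 L_x^\infty}\lsm N^{-1}\|\pd_x w\|_{L_t^4 L_x^\infty}\lsm N^{-1}\|w\|_X$, then sum over $N\ge N_0$. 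The third estimate follows the same template with $L_{t,x}^6$ replacing $L_t^4 L_x^\infty$: Coifman--Meyer at exponents $(\infty,\ldots,\infty,6)$ together with $\|w_N\|_{L_{t,x}^6}\lsm N^{-1}\|\pd_x w\|_{L_{t,x}^6}\lsm N^{-1}\|w\|_Y$.

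The main obstacle is the bookkeeping of derivative and frequency factors: without the $N^{-1}$ gain from the frequency localization of $w_N$, the dyadic sum $\sum_{N\ge N_0}$ diverges, while without the $N^{-2}$ provided by $1/\Phi$ in the symbol there is no room to absorb the $\pd_x$ acting on the output (which would bring a factor $N$, since the output is frequency-localized at $|\xi|\sim N$). A secondary but routine point is that Coifman--Meyer is used with many $L^\infty$-factors at the endpoint, but Lemma \ref{lem:Coifman-Meyer} explicitly admits $p_j=\infty$, so this causes no trouble.
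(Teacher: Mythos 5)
Your proposal is correct and follows essentially the same route as the paper: dyadic decomposition of $\Om$, Coifman--Meyer applied to each piece with the $N^{-1}$ built into $m_N$ absorbing the derivative on the ($\sim N$-localized) output, and a further summable $N^{-1}$ gained from Bernstein applied to the high-frequency factor $w_N$ (resp.\ $P_Nw_0$). The only deviation is in the $L_t^4L_x^\infty$ component of the second estimate, where you place $w_N$ in $L_t^4L_x^\infty$ and gain a full $N^{-1}$, whereas the paper places $w_N$ in $L_{t,x}^\infty$ and one low-frequency factor in $L_t^4L_x^\infty$, gaining only $N^{-\frac12-}$ via Sobolev embedding; your variant yields the slightly stronger bound $N_0^{-1}\|w\|_{X(I)}^{2k+1}$, which implies the stated one.
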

\begin{proof}
Note that heuristically,
\EQ{
\Om(w) \sim \sum_{N:N\ge N_0}\jb{\pd_x}^{-1}(w_{\ll N}w_{\ll N}\cdots w_{\ll N} w_{N}).
}
By the Strichartz's estimates and Lemma \ref{lem:Coifman-Meyer},
\EQ{
\normb{e^{i(t-t_0)\pd_x^2} \Om(w_0(x))}_{X\cap Y(I)} \lsm & \norm{\Om(w_0(x))}_{H_x^1} \\
\lsm & \sum_{N:N\ge N_0} \norm{P_{\ll N}w_0}_{L_x^\I}^{2k} \norm{P_Nw_0}_{L_x^2} \\
\lsm & N_0^{-1} \norm{w_0}_{L_x^{\I}}^{2k} \norm{w_0}_{H^1}.
}
Applying Lemmas \ref{lem:Coifman-Meyer} and \ref{lem:Bernstein}, and Sobolev embedding, we get
\begin{align*}
\norm{\Om(w)}_{X(I)} \lesssim& \sum_{N:N\geq N_0}\|w_{\ll N}\|_{L_{t,x}^{\infty}}^{2k}\|w_{N}\|_{L_{t}^{\infty}L_x^{2}}+ \sum_{N:N\geq N_0}\|w_{\ll N}\|_{L_{t,x}^{\infty}}^{2k-1}\|w_{\ll N}\|_{L_t^{4}L_x^{\infty}}\|w_{N}\|_{L_{t,x}^{\infty}} \nonumber\\
\lesssim& \sum_{N:N\geq N_0}N^{-1}\|w_{\ll N}\|_{L_t^{\infty}H_x^{1}}^{2k}\|w_{N}\|_{L_t^{\infty}H_x^{1}}\\
&+\sum_{N:N\geq N_0}N^{-\frac{1}{2}-}\|w_{\ll N}\|_{L_{t,x}^{\infty}}^{2k-1}\|w_{\ll N}\|_{L_t^{4}L_x^{\infty}}\|w_{N}\|_{L_t^{\infty}H_x^{1}}\nonumber \\
	\lesssim& N_0^{-1}\|w\|_{L_t^{\infty}H_x^{1}}^{2k}\|w_{}\|_{L_t^{\infty}H_x^{1}}+N_0^{-\frac{1}{2}-}\|w\|_{L_t^{4}L_x^{\infty}}\|w\|_{L_t^{\infty}H_x^{1}}^{2k}\\
	\lesssim&N_0^{-\frac{1}{2}-}\|w\|_{X(I)}^{2k+1}.
\end{align*}
Similarly,
\EQ{
\norm{\Om(w)}_{Y(I)} \lesssim& \sum_{N:N\geq N_0}\|w_{\ll N}\|_{L_{t,x}^{\infty}}^{2k}\|w_{N}\|_{L_{t,x}^{6}}\nonumber\\
\lsm & \sum_{N:N\geq N_0}N^{-1}\|w_{\ll N}\|_{L_{t,x}^{\infty}}^{2k}\|\partial_xw_{N}\|_{L_{t,x}^{6}}\\
\lsm & N_0^{-1}\|w\|_{L_t^{\infty}H_x^1}^{2k}\|\partial_xw\|_{L_{t,x}^{6}}\nonumber\\
\lsm & N_0^{-1} \norm{w}_{X(I)}^{2k} \norm{w}_{Y(I)}.
}
This finishes the proof.
\end{proof}
\subsection{Resonance terms}
\begin{lem}[Resonance terms]\label{lem:nonlinear-estimate-resonance}
Let $I \subset\R$ be an interval containing $t_0$. Then, for any $N_0\in2^\N$,
\EQ{
\normbb{\int_{t_0}^t e^{i(t-s)\pd_x^2}R(w(s)) \dd s}_{X\cap Y(I)} \lsm N_0^2 \norm{w}_{X(I)}^{2k-2}\norm{w}_{Y(I)}^{3}.
}
\end{lem}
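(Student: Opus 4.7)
My plan is to apply the inhomogeneous Strichartz estimate from Lemma \ref{lem:strichartz} with the dual admissible pair $(1,2)$, reducing the target to a bound on $\|\langle\partial_x\rangle R(w)\|_{L_t^1 L_x^2}$. I would then split $R(w) = R^{\mathrm{I}}(w) + R^{\mathrm{II}}(w)$ according to the two sums in Definition \ref{defn:normal-form} and treat each piece separately. The $N_0^2$ factor is expected to come entirely from $R^{\mathrm{II}}$, while $R^{\mathrm{I}}$ should be bounded without any $N_0$-dependence and absorbed into the stated estimate.

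For $R^{\mathrm{II}} := \sum_{N\le N_0}|w_{\ll N}|^{2k-2}w_{\ll N}^2\,\partial_x\overline{w}_N$, each summand has spatial Fourier support concentrated at scale $\sim N$ (since the outer $2k-1$ factors live at frequency $\ll N$ and $\partial_x\overline{w}_N$ at $\sim N$). Hence the outer $\langle\partial_x\rangle$ contributes at most $\langle N\rangle$, and the inner derivative contributes $N$, for a total gain of $N\langle N\rangle$. I would then apply H\"older's inequality distributing the $2k-1$ low-frequency factors as three copies in $L_{t,x}^6$ (extracting $\|w\|_{Y(I)}^3$), one in $L_t^4 L_x^\infty$, and the remaining $2k-6$ in $L_{t,x}^\infty$ via $H_x^1 \hookrightarrow L_x^\infty$, while $\partial_x\overline{w}_N$ is placed in $L_t^4 L_x^\infty$ using the $\langle\partial_x\rangle L_t^4 L_x^\infty$ component of $X(I)$. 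The hypothesis $k\ge 3$ is exactly what ensures enough factors to fill this distribution, and the dyadic sum $\sum_{N\le N_0} N\langle N\rangle \lesssim N_0^2$ produces the claimed factor.

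For $R^{\mathrm{I}} := \sum_N(|w|^{2k-2}w^2 - |w_{\ll N}|^{2k-2}w_{\ll N}^2)\,\partial_x\overline{w}_N$, I would exploit the algebraic telescoping identity applied to $|w|^{2k-2}w^2 = w^{k+1}\overline{w}^{k-1}$, which rewrites the difference as a sum of $\sim 2k$ terms, each containing at least one factor $w_{\gtrsim N}$ or $\overline{w}_{\gtrsim N}$ alongside $2k-2$ low- or full-frequency factors of $w$. After further dyadic decomposition $w_{\gtrsim N} = \sum_{M\ge N/C} w_M$ and exchanging the order of summation to $\sum_M \sum_{N \lesssim M}$, the inner sum collapses to $\partial_x\overline{w}_{\le M}$, yielding sums of the form $\sum_M w_M \cdot \partial_x\overline{w}_{\le M} \cdot (\text{low-frequency product})$. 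Each summand is now frequency-localized at $\sim M$, so near-orthogonality in $M$ combined with Littlewood--Paley square-function control (Lemma \ref{lem:littlewood-Paley}) and a H\"older distribution analogous to that of $R^{\mathrm{II}}$ (with $w_M$ and $\partial_x\overline{w}_{\le M}$ placed in the $L_{t,x}^6$ and $L_t^4 L_x^\infty$ slots respectively) should produce a bound by $\|w\|_{X(I)}^{2k-2}\|w\|_{Y(I)}^3$ with no $N_0$ dependence.

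\textbf{Main obstacle.} The delicate point is the $R^{\mathrm{I}}$ estimate: a naive H\"older bound on each $(N,M)$-summand leaves divergent powers of $\langle M\rangle$ and $N$ that do not sum. The resolution is to combine the structural cancellation from the telescoping identity with the near-orthogonality of the dyadic pieces in $M$, absorbing the derivatives and the $l^2$-summation into the tight budget of norms provided by $X(I)\cap Y(I)$ via Bernstein (Lemma \ref{lem:Bernstein}) and the vector-valued Littlewood--Paley inequality. Ensuring that no residual power of the dyadic scales survives, and that the final constant is free of $N_0$, is the main technical hurdle, and is exactly what justifies the paraproduct structure chosen in the definition of $R(w)$.
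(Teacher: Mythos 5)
Your overall architecture coincides with the paper's: the Strichartz reduction to $\|\langle\partial_x\rangle R(w)\|_{L_t^1L_x^2}$, the split of $R$ into its two sums, and the treatment of the low-frequency piece $R^{\mathrm{II}}$ (frequency localization of each summand at scale $\sim N$, a H\"older distribution with three $L_{t,x}^6$ slots and two $L_t^4L_x^\infty$ slots, and the crude sum over $N\le N_0$ producing $N_0^2$) are exactly what the paper does; your miscount of the low-frequency factors ($2k$ of them, not $2k-1$) is harmless.

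The gap is in the summation argument for $R^{\mathrm{I}}$. After your telescoping and reordering, the generic piece is $G_M:=w_M\,\partial_x\overline{w}_{\lesssim M}\cdot(\text{lower-frequency product})$, whose Fourier support is the full ball $\{|\xi|\lesssim M\}$, not an annulus $\{|\xi|\sim M\}$: the supports are nested, so the $G_M$ are \emph{not} near-orthogonal and $\|\sum_M G_M\|_{L_x^2}$ cannot be controlled by $(\sum_M\|G_M\|_{L_x^2}^2)^{1/2}$ as your plan asserts. The paper closes this differently: it applies $P_{M'}$ to the whole sum, notes that only $M\gtrsim M'$ contributes, writes the cost of $\langle\partial_x\rangle$ as $M'=(M'/M)\cdot M$, and uses the off-diagonal decay $M'/M\le1$ (Cauchy--Schwarz/Schur in the two dyadic indices) to reduce to $\|M\,G_M\|_{L_t^1L_x^2l_M^2}$; the leftover factor $M$ is then absorbed as a derivative on $w_M$ and summed by the square-function estimate. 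A second summation issue your plan does not address is the interior sum over the remaining $2k-2$ dyadic frequencies hidden in the ``lower-frequency product'': the paper pays for it with one more Cauchy--Schwarz in the second-largest frequency, converting another factor into $\|Nw_N\|_{l_N^2}\sim\|\partial_xw\|_{L_{t,x}^6}$. This is why two of the three $Y$-factors in the final bound carry derivatives, and it is not compatible with your proposed H\"older budget for $R^{\mathrm{I}}$ (only $w_M$ in $L_{t,x}^6$ and $\partial_x\overline{w}_{\lesssim M}$ in $L_t^4L_x^\infty$) without rebalancing the exponents.
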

\begin{proof}
Recall $R$ in Definition \ref{defn:normal-form} and use the Strichartz's estimates, then
\EQnnsub{
\normbb{\int_{t_0}^t e^{i(t-s)\pd_x^2}R(w(s)) \dd s}_{X\cap Y(I)} \lsm & \normbb{\jb{\pd_x}\sum_{N\in2^\N}(|w|^{2k-2} w^2 - |w_{\ll N}|^{2k-2} w_{\ll N}^2)\pd_x\wb w_N}_{L_t^1 L_x^2} \label{est:nonlinear-estimate-resonance-1}\\
& + \normbb{\jb{\pd_x}\sum_{N:N\le N_0} |w_{\ll N}|^{2k-2} w_{\ll N}^2\pd_x\wb w_N}_{X\cap Y(I)}. \label{est:nonlinear-estimate-resonance-2}
}

Now, we deal with \eqref{est:nonlinear-estimate-resonance-1}. For this term, it's easy to transfer the derivative to the high frequency one. We also do not distinguish between $w$ and $\wb w$. First, we denote a set of dyadic number
\EQ{
& \Psi_{N_{2k}} := \fbrk{(N_1,N_2,\cdots,N_{2k-1})\in (2^\N)^{2k-1} : N_1\le N_2\le \cdots \le N_{2k-1}\le N_{2k}}.
}
Then, by Littlewood-Paley decomposition and renumbering the functions according to their frequency, we have that
\EQ{
\absb{\sum_{N\in2^\N}(|w|^{2k-2} w^2 - |w_{\ll N}|^{2k-2} w_{\ll N}^2)\pd_x\wb w_N} \lsm & \absb{ \sum_{N\in2^\N}\sum_{N_{2k}:N\lsm N_{2k}}  A_{N_{2k}}\partial_x \wb{w}_N} \\
\sim & |\sum_{N_{2k}\in2^\N}  A_{N_{2k}}\partial_x \wb{w}_{\lsm N_{2k}}|,
}
where
\EQ{
A_{N_{2k}} := \sum_{(N_1,N_2,\cdots,N_{2k-1})\in \Psi_{N_{2k}}}w_{N_1}w_{N_2}\cdots w_{N_{2k-1}}w_{N_{2k}}.
}
Therefore, by the above notation, using the Littlewood-Paley theory and H\"older's inequality in $N_{2k}$,
\EQ{
\eqref{est:nonlinear-estimate-resonance-1} \lsm & \normbb{\jb{\pd_x} \sum_{N_{2k}\in2^\N}  A_{N_{2k}}\partial_x \wb{w}_{\lsm N_{2k}}}_{L_t^1 L_x^2} \\
\lsm & \normbb{M P_M\brkb{\sum_{N_{2k}:N_{2k}\gsm M}  A_{N_{2k}}\partial_x \wb{w}_{\lsm N_{2k}}}}_{L_t^1 L_x^2 l_{M}^2} \\
\lsm & \normbb{ \sum_{N_{2k}:N_{2k}\gsm M} \frac{M}{N_{2k}} N_{2k} P_M\brkb{A_{N_{2k}}\partial_x \wb{w}_{\lsm N_{2k}}}}_{L_t^1 L_x^2 l_{M}^2} \\
\lsm & \normbb{ N_{2k} P_M\brkb{A_{N_{2k}}\partial_x \wb{w}_{\lsm N_{2k}}}}_{L_t^1 L_x^2 l_{M}^2 l_{N_{2k}}^2} \\
\lsm & \normbb{N_{2k}A_{N_{2k}}\partial_x \wb{w}_{\lsm N_{2k}}}_{L_t^1 L_x^2 l_{N_{2k}}^2}.
}
Moreover, by H\"{o}lder's inequality in $l_{N_{2k-1}}^2$,
\EQ{
|A_{N_{2k}}| \lsm & \sum_{(N_1,N_2,\cdots,N_{2k-1})\in \Psi_{N_{2k}}}w_{N_1}w_{N_2}\cdots N_{2k-1}^{-1} N_{2k-1}w_{N_{2k-1}}w_{N_{2k}} \\
\lsm & \prod_{i=1}^{2k-2} \norm{w_{N_i}}_{l_{N_i}^\I}\cdot \sum_{N_1\le N_2\le\cdots\le N_{2k-2}}\brkb{\sum_{N_{2k-1}:N_{2k-1}\ge N_{2k-2}} N_{2k-1}^{-2}}^{1/2}  \cdot \norm{N_{2k-1}w_{N_{2k-1}}}_{l_{N_{2k-1}}^2} |w_{N_{2k}}| \\
\lsm &  \norm{w_{N}}_{l_{N}^\I}^{2k-2}\cdot \sum_{N_1\le N_2\le\cdots\le N_{2k-2}} N_{2k-2}^{-1}  \cdot \norm{Nw_{N}}_{l_{N}^2} |w_{N_{2k}}| \\
\lsm & \norm{w_{N}}_{l_{N}^\I}^{2k-2} \norm{Nw_{N}}_{l_{N}^2} |w_{N_{2k}}|.
}
Then, combing the above two estimates, by H\"older's inequality,
\EQ{
\eqref{est:nonlinear-estimate-resonance-1} \lsm & \normb{N_{2k}A_{N_{2k}}\partial_x \wb{w}_{\lsm N_{2k}}}_{L_t^1 L_x^2 l_{N_{2k}}^2} \\
\lsm & \normb{N_{2k}\norm{w_{N}}_{l_{N}^\I}^{2k-2} \norm{Nw_{N}}_{l_{N}^2} w_{N_{2k}}\partial_x \wb{w}_{\lsm N_{2k}}}_{L_t^1 L_x^2 l_{N_{2k}}^2} \\
\lsm & \|Nw_N\|_{L_{t,x}^6l_N^2}^2\|\partial_xw_{\lesssim N}\|_{L_t^4L_x^{\infty}l_N^{\infty}}\|w_N\|_{L_{t,x}^6l_N^{\infty}}\|w_{ N}\|_{L_t^4L_x^{\infty}l_N^{\infty}}\|w_N\|_{L_{t,x}^{\infty}l_N^{\infty}}^{2k-4}\\
\lesssim& \|\partial_xw\|_{L_{t,x}^6}^2\|\partial_xw\|_{L_t^4L_x^{\infty}}\|w\|_{L_{t,x}^6}\|w\|_{L_t^4L_x^{\infty}}\|w\|_{L_{t,x}^{\infty}}^{2k-4}\\
\lsm & \|w\|_{X(I)}^{2k-2}\|w\|_{Y(I)}^{3}.
}
This gives the desired bound for \eqref{est:nonlinear-estimate-resonance-1}.

For the term \eqref{est:nonlinear-estimate-resonance-2}, by Lemmas \ref{lem:strichartz} and \ref{lem:Bernstein}, we have
\begin{align*}
\eqref{est:nonlinear-estimate-resonance-2} \lesssim& \sum_{N:N\leq N_0}\Big\|\jb{\partial_x}\int_{t_0}^{t}e^{i(t-s)\pd_x^2}P_{\sim N}(|w_{\ll N}|^{2k-2} w_{\ll N}^2\pd_x\wb w_N)ds\Big\|_{{L_t^{\infty}L_x^2}\cap L_t^{4}L_x^{\infty}\cap L_{t,x}^6}\nonumber\\
	\lesssim&  \sum_{N:N\leq N_0}\|\jb{\partial_x}P_{\sim N}(|w_{\ll N}|^{2k-2} w_{\ll N}^2\pd_x\wb w_N)\|_{{L_t^{1}L_x^2}}\nonumber\\
	\lesssim& N_0 \sum_{N:N\leq N_0} \|w_{\ll N}\|_{L_{t,x}^6}^2\|w_{\ll N}\|_{L_{t,x}^{\infty}}^{2k-4}\|w_{\ll N}\|_{L_t^{4}L_x^{\infty}}^2\|\partial_xw_N\|_{L_{t,x}^6}\nonumber\\
	\lesssim& N_0^2 \|w\|_{L_{t,x}^6}^2\|\partial_xw\|_{L_{t,x}^6}\|w\|_{L_t^{4}L_x^{\infty}}^2\|w\|_{L_t^{\infty}H_x^{1}}^{2k-4}\nonumber\\
	\lesssim& N_0^2 \|w\|_{X(I)}^{2k-2}\|w\|_{Y(I)}^3,
\end{align*}
which finishes the proof of this lemma.
\end{proof}
\subsection{Higher order terms}
\begin{lem}[Higher order terms]\label{lem:nonlinear-estimate-higher-order}
Let $I \subset\R$ be an interval containing $t_0$. Then,
\EQn{\label{eq:nonlinear-estimate-higher-order}
& \sum_{j=1}^{2k+1}\normbb{\int_{t_0}^t e^{i(t-s)\pd_x^2}\Om_j(w,(F_1+F_2+F_3)(w))(s) \dd s}_{X\cap Y(I)} \\
\lsm & \|w\|_{X(I)}^{4k-5}\|w\|_{Y(I)}^6+ \|w\|_{X(I)}^{6k-2}\|w\|_{Y(I)}^3+\|w\|_{X(I)}^{4k-2}\|w\|_{Y(I)}^{3}.
}
\end{lem}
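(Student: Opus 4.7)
My plan is to follow the pattern used in the proofs of Lemmas \ref{lem:nonlinear-estimate-boundary} and \ref{lem:nonlinear-estimate-resonance}. By Strichartz's estimate (Lemma \ref{lem:strichartz}), I would reduce each term on the left-hand side to
\[
\|\jb{\pd_x}\Om_j(w,F_l(w))\|_{L_t^1 L_x^2(I\times\R)},\quad j=1,\ldots,2k+1,\ l=1,2,3.
\]
I would then decompose $\Om_j$ dyadically using the frequency projections built into $m_N$: on the support of $m_N$ the output is frequency-localized at $\sim N$, so $\jb{\pd_x}\Om_{j,N}\sim N\,\Om_{j,N}$ carries a symbol $Nm_N$ that satisfies the hypothesis of Lemma \ref{lem:Coifman-Meyer}. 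Applying this lemma and summing in $N\ge N_0$ via Littlewood--Paley theory (possibly with a harmless $N^{-\epsilon}$ Sobolev slack to force convergence of the dyadic series, as in Lemma \ref{lem:nonlinear-estimate-boundary}) will reduce the problem to H\"older-type estimates for the pointwise product of $2k$ factors of $w$ (or $\bar w$) with one factor of $F_l(w)$.

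\textbf{Distributing norms.} I would then treat the three nonlinearities separately. For $F_3(w)=ik|w|^{2k-2}w^2\pd_x\bar w$, the lone derivative will be absorbed into a $\|\pd_x w\|_{L_{t,x}^6}$ factor (a $Y$-norm); two further factors will be placed in $\|w\|_{L_{t,x}^6}\lesssim\|w\|_{Y(I)}$, two in $\|w\|_{L_t^4 L_x^\infty}\lesssim\|w\|_{X(I)}$, and the remaining $4k-4$ factors in $L_{t,x}^\infty$ via Sobolev embedding $H_x^1\hookrightarrow L_x^\infty$ (controlled by $\|w\|_{X(I)}$). The H\"older exponents add up to $1$ in time and $1/2$ in space, which produces the contribution $\|w\|_{X(I)}^{4k-2}\|w\|_{Y(I)}^3$. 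The same scheme with no derivative present handles $F_2(w)=-\tfrac{k+1}{4}|w|^{4k}w$: with $6k-4$ copies in $L_{t,x}^\infty$ we obtain $\|w\|_{X(I)}^{6k-2}\|w\|_{Y(I)}^3$. For $F_1(w)$, I would first isolate the integral $\int_{-\I}^x(\pd_y w\bar w)^2|w|^{2k-4}\,dy$ and estimate it in $L_t^1 L_x^\infty$ exactly as in \eqref{8167}, which gives $\|w\|_{X(I)}^{2k-6}\|w\|_{Y(I)}^6$; the remaining $2k+1$ derivative-free factors, placed in $L_t^\infty L_x^2$ together with $L_{t,x}^\infty$, produce $\|w\|_{X(I)}^{2k+1}$, so the total contribution is $\|w\|_{X(I)}^{4k-5}\|w\|_{Y(I)}^6$.

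\textbf{Main obstacle.} The principal technical point will be the Coifman--Meyer step combined with the dyadic summation in $N\ge N_0$. I will need to verify that $Nm_N$ is uniformly a Coifman--Meyer symbol so that the constants in Lemma \ref{lem:Coifman-Meyer} are $N$-independent, and that the outer $\jb{\pd_x}$ is balanced by the $N^{-1}$ weight in $m_N$ thanks to the output localization at frequency $\sim N$. The $l_N^2$-summability of the output (by Littlewood--Paley theory in $L_x^2$), together with the option of trading a touch of Sobolev regularity for an $N^{-\epsilon}$ decay when needed, will allow all dyadic sums to converge uniformly in $N_0$. Once this machinery is in place, summing over the three nonlinearities $F_1,F_2,F_3$ and the $2k+1$ choices of the substitution slot $j$ will yield \eqref{eq:nonlinear-estimate-higher-order}.
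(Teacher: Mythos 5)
Your proposal follows essentially the same route as the paper: Strichartz reduces everything to $\|\jb{\pd_x}\Om_j(w,F_l(w))\|_{L_t^1L_x^2}$, the outer derivative $\sim N$ is cancelled against the $N^{-1}$ in $m_N$ via Coifman--Meyer, the dyadic sum is closed by Bernstein/Littlewood--Paley, and your H\"older distributions for $F_1,F_2,F_3$ reproduce exactly the three terms $\|w\|_{X}^{4k-5}\|w\|_{Y}^6$, $\|w\|_{X}^{6k-2}\|w\|_{Y}^3$, $\|w\|_{X}^{4k-2}\|w\|_{Y}^3$. The only detail the paper makes explicit that you leave implicit is the split between $1\le j\le 2k$ (where $F_l(w)$ sits in a low-frequency slot and the $N$-sum converges by Bernstein on $w_N$, yielding an extra $N_0^{-1}$) and $j=2k+1$ (where $P_NF_l(w)$ occupies the high slot and one sums in $l_N^2$ by Littlewood--Paley), but both mechanisms are already named in your plan.
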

\begin{proof}
By the Strichartz's estimates, for $1\le j\le 2k+1$,
\EQn{\label{est:nonlinear-estimate-higher-order-1}
\normbb{\int_{t_0}^t e^{i(t-s)\pd_x^2}\Om_j(w,(F_1+F_2+F_3)(w))(s) \dd s}_{X\cap Y(I)} \lsm \normb{\jb{\pd_x}\Om_j(w,(F_1+F_2+F_3)(w))}_{L_t^1 L_x^2}.
}	

$\bullet$ \textbf{Case A: $1\le j\le 2k$.} We split the (RHS) of \eqref{est:nonlinear-estimate-higher-order-1} into the following three terms:
\EQnnsub{
\text{(RHS) of }\eqref{est:nonlinear-estimate-higher-order-1}
	\lesssim&\normb{\jb{\pd_x}\Om_j(w,F_1(w))}_{L_t^1 L_x^2} \label{est:nonlinear-estimate-higher-order-1-small-j-1}\\
	&+\normb{\jb{\pd_x}\Om_j(w,F_2(w))}_{L_t^1 L_x^2} \label{est:nonlinear-estimate-higher-order-1-small-j-2}\\
	&+\normb{\jb{\pd_x}\Om_j(w,F_3(w))}_{L_t^1 L_x^2}. \label{est:nonlinear-estimate-higher-order-1-small-j-3}
}
For \eqref{est:nonlinear-estimate-higher-order-1-small-j-1}, by Lemma \ref{lem:Coifman-Meyer}, \eqref{8167}, and Sobolev's embedding, we get
\begin{align*}
\eqref{est:nonlinear-estimate-higher-order-1-small-j-1}\lesssim& \sum_{N:N\geq N_0}\Big\|\|w_N\|_{L_x^{2}}\|w_{\ll N}\|_{L_x^{\infty}}^{2k-1}\|P_{\ll N}(w\int_{-\infty}^x(\partial_y w\bar{w})^2|w|^{2k-4}dy)\|_{L_x^{\infty}}\Big\|_{L_t^1}\\
\lesssim &\sum_{N:N\geq N_0}\|w_N\|_{L_t^{\infty}L_x^{2}}\|w_{\ll N}\|_{ L_{t,x}^{\infty}}^{2k} \Big\|\big(\int_{-\infty}^x(\partial_y w\bar{w})^2|w|^{2k-4}dy\big)_{\ll N}\Big\|_{L_t^1L_x^{\infty}}\\
\lesssim &N_0^{-1}\|w\|_{ L_t^{\infty}H_x^1}^{2k+1}\|w\|_{Y(I)}^{6}\|w\|_{X(I)}^{2k-6}\\
\lesssim&N_0^{-1}\|w\|_{X(I)}^{4k-5} \|w\|_{Y(I)}^6.
\end{align*}
Similarly, for \eqref{est:nonlinear-estimate-higher-order-1-small-j-2} and \eqref{est:nonlinear-estimate-higher-order-1-small-j-3}, we have
\begin{align*}
\eqref{est:nonlinear-estimate-higher-order-1-small-j-2}\lesssim& \sum_{N:N\geq N_0}\Big\|\|w_N\|_{L_x^6}\|w_{\ll N}\|_{L_x^6}^2\|w_{\ll N}\|_{L_x^{\infty}}^{2k-3}\|P_{\ll N}(|w|^{4k}w)\|_{L_x^{\infty}}\Big\|_{L_t^1}\\
	\lesssim &\sum_{N:N\geq N_0}\|w_N\|_{ L_{t,x}^6}\|w_{\ll N}\|_{ L_{t,x}^6}^2 \|w_{\ll N}\|_{ L_t^4L_x^{\infty}}^2\|w_{\ll N}\|_{ L_{t,x}^{\infty}}^{6k-4}\\
	\lesssim & N_0^{-1}\|\partial_x w \|_{ L_{t,x}^6}\|w\|_{ L_{t,x}^6}^2\|w\|_{ L_t^4L_x^{\infty}}^2\|w\|_{ L_t^{\infty}H_x^1}^{6k-4}\\
	\lesssim&  N_0^{-1} \|w\|_{X(I)}^{6k-2} \|w\|_{Y(I)}^3,
\end{align*}
and
\begin{align*}
\eqref{est:nonlinear-estimate-higher-order-1-small-j-3}
\lesssim& \sum_{N:N\geq N_0}\Big\|\|w_N\|_{L_x^6}\|w_{\ll N}\|_{L_x^{\infty}}^{2k-1}\|P_{\ll N}(|w|^{2k-2}w^2 \partial_x \wb{w})\|_{L_x^3}\Big\|_{L_t^1}\\
\lesssim &\sum_{N:N\geq N_0}\|w_N\|_{ L_{t,x}^6}\| \partial_x w_{\ll N}\|_{ L_{t,x}^6} \|w_{\ll N}\|_{ L_{t,x}^6}\|w_{\ll N}\|_{ L_t^4L_x^{\infty}}^2\|w_{\ll N}\|_{ L_{t,x}^{\infty}}^{4k-4}\\
\lesssim & N_0^{-1}\|\partial_x w \|_{ L_{t,x}^6}^2\|w\|_{ L_{t,x}^6}\|w\|_{ L_t^4L_x^{\infty}}^2\|w\|_{ L_t^{\infty}H_x^1}^{4k-4}\\
\lesssim&  N_0^{-1} \|w\|_{X(I)}^{4k-2} \|w\|_{Y(I)}^3.
\end{align*}

Collecting the above three estimates, we have that
\EQn{\label{8168}
& \sum_{j=1}^{2k}\normbb{\int_{t_0}^t e^{i(t-s)\pd_x^2}\Om_j(w,(F_1+F_2+F_3)(w))(s) \dd s}_{X\cap Y(I)} \\
& \lesssim N_0^{-1} \brkb{ \|w\|_{X(I)}^{4k-5} \|w\|_{Y(I)}^6+ \|w\|_{X(I)}^{6k-2} \|w\|_{Y(I)}^3+\|w\|_{X(I)}^{4k-2} \|w\|_{Y(I)}^3}.
}

$\bullet$ \textbf{Case B: $j= 2k+1$.} We also consider the following three terms,
\EQnnsub{
\text{(RHS) of }\eqref{est:nonlinear-estimate-higher-order-1}
\lesssim&\normb{\jb{\pd_x}\Om_{2k+1}(w,F_1(w))}_{L_t^1 L_x^2} \label{est:nonlinear-estimate-higher-order-1-large-j-1}\\
&+\normb{\jb{\pd_x}\Om_{2k+1}(w,F_2(w))}_{L_t^1 L_x^2} \label{est:nonlinear-estimate-higher-order-1-large-j-2}\\
&+\normb{\jb{\pd_x}\Om_{2k+1}(w,F_3(w))}_{L_t^1 L_x^2}. \label{est:nonlinear-estimate-higher-order-1-large-j-3}
}
For the term \eqref{est:nonlinear-estimate-higher-order-1-large-j-1}, by Minkowski's inequality, \eqref{8167}, and Lemmas \ref{lem:Bernstein},  \ref{lem:littlewood-Paley} and \ref{lem:Coifman-Meyer}, we get
\begin{align*}
\eqref{est:nonlinear-estimate-higher-order-1-large-j-1} \lesssim& \Big\|\jb{\pd_x}\sum_{N:N\geq N_0}P_{\thicksim N}\Om_{2k+1}\big(w,F_1(w)\big)\Big\|_{L_t^1 L_x^2}\\
\lesssim& \|NP_{\thicksim N}\Om_{2k+1}\big(w_{\ll N},P_NF_1(w)\big)\|_{L_t^1 L_x^2l_N^2}\\
	\lesssim& \|w_{\ll N}\|_{L_{t,x}^{\infty}l_N^{\infty }}^{2k}\|(w\int_{-\infty}^x(\partial_y w\wb{w})^2|w|^{2k-4}dy)_{ N}\|_{L_t^1 L_x^2l_N^2}\\
	\lesssim&\|w\|_{L_t^{\infty}H_x^1}^{2k}\|w\int_{-\infty}^x(\partial_y w\wb{w})^2|w|^{2k-4}dy\|_{L_t^1 L_x^2}\\
	\lesssim&\|w\|_{X(I)}^{4k-5}\|w\|_{Y(I)}^6.
\end{align*}
Similarly, for \eqref{est:nonlinear-estimate-higher-order-1-large-j-2} and \eqref{est:nonlinear-estimate-higher-order-1-large-j-3}, we have
\begin{align*}
	\eqref{est:nonlinear-estimate-higher-order-1-large-j-2}
	\lesssim& \|NP_{\thicksim N}\Om_{2k+1}\big(w_{\ll N},P_NF_2(w)\big)\|_{L_t^1 L_x^2l_N^2}\\
	\lesssim& \|w_{\ll N}\|_{L_{t,x}^{\infty}l_N^{\infty }}^{2k}\|(|w|^{4k}w)_{ N}\|_{L_t^1L_x^2l_N^2}\\
	\lesssim&\|w\|_{L_t^{\infty}H_x^1}^{2k}\||w|^{4k}w\|_{L_t^1 L_x^2}\\
	\lesssim&\|w\|_{L_t^{\infty}H_x^1}^{2k}\|w\|_{L_{t,x}^6}^3\|w\|_{L_t^4L_x^{\infty}}^2\|w\|_{L_{t,x}^{\infty}}^{4k-4}\\
	\lesssim&\|w\|_{X(I)}^{6k-2}\|w\|_{Y(I)}^3,
\end{align*}
and
\begin{align*}
\eqref{est:nonlinear-estimate-higher-order-1-large-j-3}
\lesssim& \|NP_{\thicksim N}\Om_{2k+1}\big(w_{\ll N},P_NF_3(w)\big)\|_{L_t^1 L_x^2l_N^2}\\
	\lesssim& \|w_{\ll N}\|_{L_{t,x}^{\infty}l_N^{\infty }}^{2k}\|(|w|^{2k-2}w^2 \partial_x \wb{w})_{ N}\|_{L_t^1L_x^2l_N^2 }\\
	\lesssim&\|w\|_{L_t^{\infty}H_x^1}^{2k}\||w|^{2k-2}w^2 \partial_x \wb{w}\|_{L_t^1 L_x^2}\\
	\lesssim&\|w\|_{L_t^{\infty}H_x^1}^{2k}\|w\|_{L_{t,x}^6}^{2}\|w\|_{L_t^4L_x^{\infty}}^2\|\partial_x w\|_{L_{t,x}^6}\|w\|_{L_{t,x}^{\infty}}^{2k-4}\\
	\lesssim&\|w\|_{X(I)}^{4k-2}\|w\|_{Y(I)}^{3}.
\end{align*}

By the above three estimates, we obtain
\EQn{\label{8179}
& \normbb{\int_{t_0}^t e^{i(t-s)\pd_x^2}\Om_{2k+1}(w,(F_1+F_2+F_3)(w))(s) \dd s}_{X\cap Y(I)} \\
& \lesssim  \|w\|_{X(I)}^{4k-5}\|w\|_{Y(I)}^6+ \|w\|_{X(I)}^{6k-2}\|w\|_{Y(I)}^3+\|w\|_{X(I)}^{4k-2}\|w\|_{Y(I)}^{3}.
}
Then, \eqref{eq:nonlinear-estimate-higher-order} follows by \eqref{8168} and \eqref{8179}.
\end{proof}

\section{Existence of wave operator}
Our objective of this section is to prove Theorem \ref{main theorem1}. We first reduce the final data problem of \eqref{GDNLS} to the equation  \eqref{eq:gdnls-gauge} after gauge transform. The existence of wave operator for \eqref{eq:gdnls-gauge} is given as follows, where we only consider the forward-in-time case.
\begin{prop}\label{main prop}
Let $\sigma=k\in \mathbb{Z}$ and $k\geq 3$. Let $V_+\in H_x^1$. Then, there exists $T\gg 1$ depending on the profile of $V_+$, such that there exists a unique solution $w\in C([T, +\infty);H_x^1(\R))$ to \eqref{eq:gdnls-gauge}, satisfying
\begin{align*}
\lim_{t\ra+\I}\|w(t,x)-e^{it\pd_x^2 }V_+\|_{H_x^1}=0.
\end{align*}
\end{prop}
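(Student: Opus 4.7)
The plan is to construct $w$ on $[T,+\infty)$ by approximating it with Cauchy solutions $w_n$ posed at times $t_n\to+\infty$ and passing to the limit, via a contraction built on the normal form equation \eqref{eq:gdnls-normal-form}. First, I fix $N_0\in 2^{\N}$ large enough, depending only on $\|V_+\|_{H_x^1}$, that the factor $N_0^{-\frac12-}$ appearing in Lemma \ref{lem:nonlinear-estimate-boundary} is much smaller than $1/(2k)$, and then choose $T\gg 1$ so that $\|e^{it\pd_x^2}V_+\|_{Y([T,+\infty))}\le\delta$ for a small $\delta=\delta(V_+,N_0)$; this is possible because $\|e^{it\pd_x^2}V_+\|_{Y(\R)}$ is globally finite by Strichartz. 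For each $n$ with $t_n>T$ I set $w_n(t_n):=e^{it_n\pd_x^2}V_+$ and run a contraction mapping on the right-hand side of \eqref{eq:gdnls-normal-form} with $t_0=t_n$ and $w_0=e^{it_n\pd_x^2}V_+$, inside a closed ball $\mathcal B\subset X\cap Y([T,t_n])$ around the linear profile. Lemmas \ref{lem:nonlinear-estimate-f1f2}--\ref{lem:nonlinear-estimate-higher-order} show that every nonlinear term carries either the factor $N_0^{-\frac12-}$ (the boundary $\Om$ transforms) or at least three factors of $\|w\|_Y$ (the $F_1,F_2,R,\Om_j$ Duhamel integrals), so the map is a contraction on $\mathcal B$ and yields a unique $w_n\in X\cap Y$ bounded uniformly in $n$.

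Next I show $\{w_n\}$ is Cauchy in $X\cap Y([T,+\infty))$. For $m<n$ I subtract the two integral equations on $[T,t_m]$: the only boundary discrepancy is $w_n(t_m)-w_m(t_m)=w_n(t_m)-e^{it_m\pd_x^2}V_+$, controlled by applying the contraction bound of the previous paragraph to the shrinking interval $[t_m,t_n]$, so it vanishes as $m\to\infty$. The multilinear differences in $F_1,F_2,\Om,R$, and $\Om_j$ are handled exactly as in Lemmas \ref{lem:nonlinear-estimate-f1f2}--\ref{lem:nonlinear-estimate-higher-order} and are small thanks to the $Y$-smallness, producing
\[
\|w_n-w_m\|_{X\cap Y([T,t_m])}\lesssim \|w_n(t_m)-e^{it_m\pd_x^2}V_+\|_{H_x^1}\to 0.
\]
Therefore $w:=\lim w_n$ exists in $X\cap Y([T,+\infty))$, solves \eqref{eq:gdnls-gauge}, and is the unique solution in this class.

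The main obstacle is verifying the asymptotic $\|w(t)-e^{it\pd_x^2}V_+\|_{H_x^1}\to 0$. Taking $n\to\infty$ in the identity
\[
w_n(t)-e^{it\pd_x^2}V_+ = k\Om(w_n(t)) - ke^{i(t-t_n)\pd_x^2}\Om(e^{it_n\pd_x^2}V_+) + I_n(t),
\]
where $I_n(t)$ collects the $F_1,F_2,R,\Om_j$ Duhamel integrals from $t_n$ to $t$, the integral remainder converges to some $I(t)$ with $\|I(t)\|_{H_x^1}\to 0$ by Lemmas \ref{lem:nonlinear-estimate-f1f2}, \ref{lem:nonlinear-estimate-resonance}, \ref{lem:nonlinear-estimate-higher-order} and the absolute continuity of the $Y$-norm on $[t,+\infty)$. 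The difficulty is that the direct bound from Lemma \ref{lem:nonlinear-estimate-boundary} only gives $\|\Om(w(t))\|_{H_x^1}=O(N_0^{-\frac12-})$, a small constant but not one that vanishes as $t\to\infty$. To handle this I write $w(t)=e^{it\pd_x^2}V_+ +\tilde w(t)$, use multilinearity to split $\Om(w(t))=\Om(e^{it\pd_x^2}V_+)+E(\tilde w,w)$ with $\|E\|_{H_x^1}\lesssim N_0^{-\frac12-}\|\tilde w\|_{H_x^1}$, and prove $\|\Om(e^{it\pd_x^2}V_+)\|_{H_x^1}\to 0$ by a non-stationary phase argument on the Fourier representation
\[
\widehat{\Om(e^{it\pd_x^2}V_+)}(\xi)=\sum_{N\ge N_0}\int_{\Ga_\xi(\vec\eta)} m_N(\vec\eta)\,e^{-it\sum_j\iota_j\eta_j^2}\prod_{j=1}^{2k+1}\wh{V_+^{(\iota_j)}}(\eta_j)\,\dd\vec\eta.
\]
On the support of $m_N$, the gradient of $\psi(\vec\eta):=\sum_j\iota_j\eta_j^2$ restricted to $\Ga_\xi$ has size $\gtrsim N$ (the stationary condition would force $|\eta_{2k+1}|=|\xi|$, contradicting $|\eta_{2k+1}|\sim N\gg|\eta_j|$ for $j\le 2k$), so integration by parts in $\vec\eta$ gives a $t^{-1}$ gain for Schwartz $V_+$, and the estimate extends to $V_+\in H_x^1$ by density together with the uniform bound of Lemma \ref{lem:nonlinear-estimate-boundary}. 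The same argument shows $e^{i(t-t_n)\pd_x^2}\Om(e^{it_n\pd_x^2}V_+)\to 0$ in $H_x^1$ as $t_n\to\infty$. Absorbing $E(\tilde w,w)$ on the left gives $\|\tilde w(t)\|_{H_x^1}\lesssim \|\Om(e^{it\pd_x^2}V_+)\|_{H_x^1}+\|I(t)\|_{H_x^1}\to 0$, completing the proof.
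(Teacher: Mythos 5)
Your overall strategy matches the paper's: approximate by Cauchy problems posed at $t_n\to\infty$, contract in $X\cap Y$ using the $Y$-smallness of the free evolution and the $N_0^{-\frac12-}$ gain in the boundary normal form, then pass to the limit. The one genuine gap is in your Cauchy-sequence step. You claim $\|w_n-w_m\|_{X\cap Y([T,t_m])}\lsm \|w_n(t_m)-e^{it_m\pd_x^2}V_+\|_{H_x^1}\to 0$, with the right-hand side ``controlled by applying the contraction bound to the shrinking interval $[t_m,t_n]$.'' But the integral identity on $[t_m,t_n]$ for $w_n(t_m)-e^{it_m\pd_x^2}V_+$ contains the boundary term $k\Om(w_n(t_m))$, and Lemma \ref{lem:nonlinear-estimate-boundary} only bounds this by $CN_0^{-\frac12-}R^{2k+1}$: a small \emph{constant}, not a quantity that shrinks with the interval. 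This is precisely the obstruction you yourself flag two paragraphs later for the asymptotics, and it afflicts the Cauchy step in the same way. The step is repairable with the machinery you build at the end (write $\Om(w_n(t_m))=\Om(e^{it_m\pd_x^2}V_+)+E$, absorb $E$, and invoke your non-stationary-phase decay of $\Om(e^{it\pd_x^2}V_+)$ together with a bootstrap showing $\|w_n\|_{Y([t_m,t_n])}\to 0$ uniformly in $n$), but as written the displayed convergence does not follow from the contraction bounds alone. The paper sidesteps this entirely: it never compares $w_n$ with $w_m$, but instead proves that the limit equation \eqref{eq:gdnls-normal-form-t-infinity}, which keeps $k\Om(w)$ on the right-hand side, is itself well-posed in $B'_{R,\de}$, and estimates $w_n-w$ directly; the troublesome term then only ever appears as the difference $\Om(w_n)-\Om(w)$, which is multilinear in $w_n-w$ and absorbed by the prefactor $N_0^{-\frac12-}R^{2k}$.

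Your treatment of the final asymptotics is genuinely different from the paper's and appears correct. The paper gets $\|w(t)-e^{it\pd_x^2}V_+\|_{H_x^1}\to0$ essentially for free from $w_n(t_n)=e^{it_n\pd_x^2}V_+$, the uniform convergence $\|w-w_n\|_{L_t^\infty H_x^1}\to0$, and the independence of the limit $w$ from the chosen sequence. You instead prove the quantitative decay $\|\Om(e^{it\pd_x^2}V_+)\|_{H_x^1}\to0$ by non-stationary phase (on $\supp m_N$ the restricted phase gradient is $\gsm N$ since $|\eta_{2k+1}|\sim N\gg|\eta_j|$, $j\le 2k$), extended to $H^1$ data by multilinearity and density, and then absorb the remainder $E(\wt w,w)$. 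This is more work but yields extra information (a rate for smooth data) and, once stated, also furnishes exactly the ingredient needed to close your Cauchy step; you should make that dependence explicit, or else adopt the paper's comparison-with-the-limit-equation device, in which case the stationary-phase lemma becomes optional.
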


Next, we provide the proof of Theorem \ref{main theorem1} using Proposition \ref{main prop}.

\begin{proof}[Proof of Theorem \ref{main theorem1}]
We only prove the forward-in-time case, and it suffices to prove
 \begin{align}\label{8223}
  \|u(t)-e^{it\pd_x^2}V_+\|_{H_x^1}\rightarrow 0, \quad \mbox{as }t\rightarrow +\infty.
  \end{align}
By Proposition \ref{main prop}, we have the existence and uniqueness of $w$ and
\begin{align}\label{8222}
	\|w(t)-e^{it\pd_x^2}V_+\|_{H_x^1}\rightarrow 0, \quad \mbox{as }t\rightarrow +\infty.
\end{align}
By the density argument, we are reduced to prove \eqref{8223} assuming $w$ satisfies
 \begin{align}\label{8241}
 \|w\|_{L_x^{2k}\cap L_x^{\infty}}\rightarrow 0, \quad \mbox{as }t\rightarrow +\infty.
  \end{align}
  Moreover, recall the gauge transform in \eqref{8242}, we have
  \begin{align}\label{8224}
  	\|u(t)-e^{it\pd_x^2}V_+\|_{H_x^1}=&\|w(t,x)-e^{\frac i2\int_{-\infty}^x|w(t,y)|^{2k} dy}e^{it\pd_x^2}V_+\|_{H_x^1}\nonumber\\
  	\lesssim& \|w(t,x)-e^{it\pd_x^2}V_+\|_{H_x^1}+\|(e^{\frac i2\int_{-\infty}^x|w(t,y)|^{2k} dy}-1)e^{it\pd_x^2}V_+\|_{H_x^1}\nonumber\\
  	\lesssim& \|w(t,x)-e^{it\pd_x^2}V_+\|_{H_x^1}+\|e^{\frac i2\int_{-\infty}^x|w(t,y)|^{2k} dy}-1\|_{L_x^{\infty}}\|e^{it\pd_x^2}V_+\|_{H_x^1}\nonumber\\
  	&+\Big\|\partial_x\big(e^{\frac i2\int_{-\infty}^x|w(t,y)|^{2k} dy}-1\big)\Big\|_{L_x^{\infty}}\|e^{it\pd_x^2}V_+\|_{L_x^2}.
  \end{align}
By \eqref{8241}, we can easily obtain that when $t\rightarrow +\infty$,
 \begin{align}\label{f-1}
 \Big\|\int_{-\infty}^x|w(t,y)|^{2k} dy\Big\|_{L_x^{\infty}}\lesssim \|w\|_{L_x^{2k}}^{2k}\rightarrow 0,
  \end{align}
  and
  \begin{align}\label{f-2}
 \Big\|\partial_x\big(e^{\frac i2\int_{-\infty}^x|w(t,y)|^{2k} dy}-1\big)\Big\|_{L_x^{\infty}}\lesssim \|w\|_{L_x^{\infty}}^{2k}\rightarrow 0.
  \end{align}
Hence, \eqref{8223} follows from the estimates \eqref{8222}, \eqref{8224}, \eqref{f-1} and \eqref{f-2}.
\end{proof}

Now, it suffices to prove Proposition \ref{main prop}.
\begin{proof}[Proof of Proposition \ref{main prop}]
By the Strichartz's estimates, we have that
\begin{align*}
	\|e^{it\pd_x^2}V_+\|_{X(I)}\le C\|V_+\|_{H_x^1}:=R,
\end{align*}
and for any $0<\delta\ll 1$, there exists $T\gg 1$, such that
\begin{align*}
	\|e^{it\pd_x^2}V_+\|_{Y([T,+\I))}\le\delta.
\end{align*}
We assume that $R\ge1$ and $\de\le R$ without loss of generality.
Let $t_n>T$ such that $\lim_{n\ra\I} t_n = +\I$. Assume $w_n$ is the solution to \eqref{eq:gdnls-normal-form} with $t_0=t_n$ and $w_0=e^{it_n\pd_x^2}V_+$, namely
\EQn{\label{eq:gdnls-normal-form-tn}
w_n = & e^{it\pd_x^2} V_+ - i\int_{t_n}^t e^{i(t-s)\pd_x^2}(F_1+F_2)(w_n(s)) \dd s  \\
& - ke^{i(t-t_n)\pd_x^2} \Om(e^{it_n\pd_x^2}V_+) +k \Om(w_n)+ k\int_{t_n}^t e^{i(t-s)\pd_x^2} R(w_n(s)) \dd s \\
& -ik\sum_{j=1}^{2k+1}\int_{t_n}^t e^{i(t-s)\pd_x^2}\Om_j(w_n,(F_1+F_2+F_3)(w_n))(s) \dd s.
}
Denote the right hand side of the above equation as $\Phi_n(w_n)$. Let $I=[T,+\I)$.
Take the work space as
\EQ{
B_{R,\de}:=\fbrk{w_n\in C(I;H_x^1(\R)): \norm{w_n}_{X(I)}\le 2R\text{, and }\norm{w_n}_{Y(I)}\le 2\de}.
}
By Lemma \ref{lem:nonlinear-estimate-f1f2},
\EQn{\label{eq:contraction-wn-f1f2}
& \normbb{\int_{t_n}^t e^{i(t-s)\pd_x^2}(F_1+F_2)(w_n(s)) \dd s}_{X\cap Y(I)} \\ \lsm & \|w_n\|_{X(I)}^{2k-5}\|w_n\|_{Y(I)}^6+\|w_n\|_{X(I)}^{2k-2}\|w_n\|_{Y(I)}^3 + \|w_n\|_{X(I)}^{4k-2}\|w_n\|_{Y(I)}^3 \\
\lsm & R^{2k-5}\de^6+R^{2k-2}\de^3 + R^{4k-2}\de^3 \\
\lsm&_{R}  \de^2.
}
By the Strichartz's estimates and Lemma \ref{lem:nonlinear-estimate-boundary},
\EQ{
\normb{e^{i(t-t_n)\pd_x^2} \Om(e^{it_n\pd_x^2}V_+)}_{X\cap Y(I)} \lsm &  N_0^{-1} \normb{ e^{it_n\pd_x^2}V_+}_{L_x^\I}^{2k} \normb{ e^{it_n\pd_x^2}V_+}_{H_x^1} \\
\lsm & N_0^{-1} R \normb{ e^{it_n\pd_x^2}V_+}_{L_x^\I}^{2k}.
}
Furthermore, by density argument, we can obtain that
\EQn{\label{eq:contraction-wn-boundary-linear-limit}
\lim_{n\ra\I} \normb{e^{i(t-t_n)\pd_x^2} \Om(e^{it_n\pd_x^2}V_+)}_{X\cap Y(I)} = 0.
}
Hence, for sufficiently large $n$,
\EQn{\label{eq:contraction-wn-boundary-linear}
\normb{e^{i(t-t_n)\pd_x^2} \Om(e^{it_n\pd_x^2}V_+)}_{X\cap Y(I)} \le \frac14\de.
}
Moreover, by Lemma \ref{lem:nonlinear-estimate-boundary},
\EQn{\label{eq:contraction-wn-boundary-x}
\norm{\Om(w_n)}_{X(I)} \lsm N_0^{-\frac12-} \norm{w_n}_{X(I)}^{2k+1} \lsm N_0^{-\frac12-} R^{2k+1},
}
and
\EQn{\label{eq:contraction-wn-boundary-y}
\norm{\Om(w_n)}_{Y(I)} \lsm N_0^{-1} \norm{w_n}_{X(I)}^{2k} \norm{w_n}_{Y(I)} \lsm N_0^{-1} R^{2k} \de .
}
By Lemma \ref{lem:nonlinear-estimate-resonance},
\EQn{\label{eq:contraction-wn-resonance}
\normbb{\int_{t_n}^t e^{i(t-s)\pd_x^2} R(w_n(s)) \dd s}_{X\cap Y(I)} \lsm N_0^2 \norm{w_n}_{X(I)}^{2k-2}\norm{w_n}_{Y(I)}^{3} \lsm N_0^2 R^{2k-2}\de^{3} \lsm_{N_0,R} \de^2.
}
By Lemma \ref{lem:nonlinear-estimate-higher-order},
\EQn{\label{eq:contraction-wn-higher-order}
& \sum_{j=1}^{2k+1} \normb{\int_{t_n}^t e^{i(t-s)\pd_x^2}\Om_j(w_n,(F_1+F_2+F_3)(w_n))(s) \dd s}_{X\cap Y(I)} \\
\lsm & \|w_n\|_{X(I)}^{4k-5}\|w_n\|_{Y(I)}^6+ \|w_n\|_{X(I)}^{6k-2}\|w_n\|_{Y(I)}^3+\|w_n\|_{X(I)}^{4k-2}\|w_n\|_{Y(I)}^{3} \\
\lsm & R^{4k-5}\de^6+ R^{6k-2}\de^3+R^{4k-2}\de^{3} \\
\lsm & _R \de^2.
}
By \eqref{eq:contraction-wn-f1f2}, \eqref{eq:contraction-wn-boundary-linear}, \eqref{eq:contraction-wn-boundary-x}, \eqref{eq:contraction-wn-resonance}, \eqref{eq:contraction-wn-higher-order}, for $w_n \in B_{R,\de}$ and sufficiently large $n$,
\EQ{
\norm{\Phi_n(w_n)}_{X(I)} \le & R + C(R)\de^2 + \frac14\de + C N_0^{-\frac12-} R^{2k+1} + C(N_0,R)\de^2 + C(R) \de^2 \\
\le & R + \brkb{\frac14+C(N_0,R)\de}\de + C N_0^{-\frac12-} R^{2k+1}.
}
First, we take $N_0=N_0(R)$ such that
\EQ{
C N_0^{-\frac12-} R^{2k} \le\frac12.
}
Then, we take $\de=\de(N_0, R)=\de(R)$ such that
\EQ{
C(N_0, R)\de \le \frac14.
}
Therefore,
\EQn{\label{eq:contraction-wn-x}
\norm{\Phi_n(w_n)}_{X(I)} \le & R + \frac12\de + \frac12 R
\le  2R.
}
Similarly, by \eqref{eq:contraction-wn-f1f2}, \eqref{eq:contraction-wn-boundary-linear}, \eqref{eq:contraction-wn-boundary-y}, \eqref{eq:contraction-wn-resonance}, \eqref{eq:contraction-wn-higher-order}, for $w_n \in B_{R,\de}$ and sufficiently large $n$,
\EQn{\label{eq:contraction-wn-y}
\norm{\Phi_n(w_n)}_{Y(I)} \le & \de + C(R)\de^2 + \frac14\de + C N_0^{-1} R^{2k} \de + C(N_0,R)\de^2 + C(R) \de^2 \\
\le & \de + \brkb{\frac14+C(N_0,R)\de + C N_0^{-1}R^{2k}}\de  \\
\le & 2\de.
}
Therefore, combining \eqref{eq:contraction-wn-x} and \eqref{eq:contraction-wn-y}, we have that $\Phi_n:B_{R,\de}\ra B_{R,\de}$. The contraction mapping then follows using similar argument. Therefore, we obtain the existence and uniqueness of $w_n\in C([T,+\I);H_x^1(\R))$ to the equation \eqref{eq:gdnls-normal-form-tn}.

Next, we take the limit in $n$. In the view of \eqref{eq:contraction-wn-boundary-linear-limit}, we expect the limit solution solves the following equation:
\EQn{\label{eq:gdnls-normal-form-t-infinity}
w = & e^{it\pd_x^2} V_+ - i\int_{+\I}^t e^{i(t-s)\pd_x^2}(F_1+F_2)(w(s)) \dd s  \\
& + k\Om(w)+ k\int_{+\I}^t e^{i(t-s)\pd_x^2} R(w(s)) \dd s \\
& -ik\sum_{j=1}^{2k+1}\int_{+\I}^t e^{i(t-s)\pd_x^2}\Om_j(w,(F_1+F_2+F_3)(w))(s) \dd s.
}
In fact, using the similar method when we solves \eqref{eq:gdnls-normal-form-tn}, we are able to prove that the equation \eqref{eq:gdnls-normal-form-t-infinity} is well-posed in
\EQ{
B'_{R,\de}:=\fbrk{w\in C([T,\I);H_x^1(\R)): \norm{w}_{X(I)}\le 2R\text{, and }\norm{w}_{Y(I)}\le 2\de},
}
for the above $T$. Moreover, take a difference between the equations \eqref{eq:gdnls-normal-form-tn} and \eqref{eq:gdnls-normal-form-t-infinity}, and use the nonlinear estimates as above, then we get that
\EQ{
\lim_{n\ra +\I} \norm{w_n-w}_{X\cap Y([T,+\I))}=0.
}
Furthermore, since $w_n$ solves the equation \eqref{eq:gdnls-normal-form-tn}, it is also the solution to \eqref{eq:gdnls-gauge} with Cauchy data $w(t_n)=e^{it_n\pd_x^2} V_+$. Therefore, the limit solution $w$ to \eqref{eq:gdnls-normal-form-t-infinity} is also the solution to \eqref{eq:gdnls-gauge} satisfying
\EQ{
\lim_{t\ra +\I}\normb{w(t)-e^{it\pd_x^2}V_+}_{H_x^1(\R)} =0.
}
Then, we finish the proof of Proposition \ref{main prop}.
\end{proof}

\section*{Acknowledgements}
The authors would like to express their gratitude to Professor Yifei Wu for his valuable comments and insightful suggestions.

\vskip 0.2cm

\end{document}